\documentclass[11pt,reqno]{amsart}
\usepackage{amsthm,amsmath,amssymb,mathrsfs}

\theoremstyle{plain}
\newtheorem{theorem}{Theorem} [section]
\newtheorem{corollary}[theorem]{Corollary}
\newtheorem{lemma}[theorem]{Lemma}
\newtheorem{proposition}[theorem]{Proposition}

\theoremstyle{definition}
\newtheorem{definition}[theorem]{Definition}

\newtheorem{remark}[theorem]{Remark}

\def\N{{\mathbb{N}}}
\def\R{{\mathbb{R}}}

\def\F{{\mathcal{F}}}

\def\span{{\text{span}}}

%senocardinal

%seno\sum_{k} |\widehat{f} (\w+n) | \right)^2

% numeros naturales
%varphi

%omega
\newcommand{\w}{\omega}
%epsilon
\newcommand{\e}{\varepsilon}
%alpha

%
\newcommand{\la}{\lambda}
%theta

%gamma

%delta
\newcommand{\dd}{\delta}
%Gamma

%Omega

%Delta

%tau

%sigma

%Section R

%N

%span

\newcommand{\pr}{\textnormal{Pr}}

%rango
\newcommand{\rg}{\textnormal{rank}}

\newtheorem{thm*}{Teorema}[section]

\newcommand{\CHI}{\hbox{\raise .4ex \hbox{$\chi$}}}

  % "span" is a reserved word
\def\subset{\subseteq}

\def\F{{\mathcal{F}}}
\def\B{{\mathcal{B}}}
\def\Di{{\mathcal{D}}}

\def\S{{\textbf{S}}}

\def\BPi{{\bold {\Pi}}}

\begin{document}

\title[A DIMENSION REDUCTION SCHEME]
{A dimension reduction scheme for the computation of optimal unions of subspaces}

\author[A. Aldroubi]{A.~Aldroubi}
\address[Akram Aldroubi]{Department of Mathematics\\
Vanderbilt University\\ 1326 Stevenson Center\\ Nashville, TN 37240}
\email[Akram Aldroubi]{akram.aldroubi@vanderbilt.edu}

\author[M. Anastasio]{M.~Anastasio}
\address[M. Anastasio, C. Cabrelli and U. Molter]{Departamento de
Matem\'atica \\ Facultad de Ciencias Exactas y Naturales\\ Universidad
de Buenos Aires\\ Ciudad Universitaria, Pabell\'on I\\ 1428 Capital
Federal\\ Argentina\\ and IMAS, UBA-CONICET, Argentina}
\email[Magal\'{i} Anastasio]{manastas@dm.uba.ar}

\author[C. Cabrelli]{C.~Cabrelli}
\email[Carlos~Cabrelli]{cabrelli@dm.uba.ar}

\author[U.Molter]{U.~Molter}
\email[Ursula~M.~Molter]{umolter@dm.uba.ar}

 \thanks{The research of A.~Aldroubi is supported in part by NSF Grant DMS-0807464. M.~Anastasio, C. Cabrelli and U. Molter are partially supported by Grants UBACyT
X149 and X028 (UBA), PICT 2006-00177 (ANPCyT), and PIP 2008-398 (CONICET).}

\subjclass[2000]{Primary 94A12, 94A20; Secondary  15A52, 65F15, 15A18}

\keywords{Sparsity, projective clustering, dimensionality reduction, random matrices, concentration inequalities.}

\date{\today}
\maketitle

\begin{abstract}

Given a set of points $\F$ in a high dimensional space, the problem of finding a union of subspaces $\cup_i V_i\subset \R^N$ that best explains the data $\F$ increases dramatically with the dimension of $\R^N$. In this article, we study a class of transformations that map the problem into
another one in lower dimension. We use the best model in the low dimensional space to approximate the best solution in the original high dimensional space. We then estimate the error produced between this solution and the  optimal solution in the high dimensional space.

\end{abstract}

\section{Introduction}

Given a set of vectors (points) $\F=\{f_1,\dots,f_m\}$ in  a Hilbert space $\mathcal H$ (finite or infinite dimensional), the problem of finding a union of subspaces $\cup_i V_i\subset \mathcal H$ that best explains the data $\F$ has applications to mathematics and engineering  \cite {CL09,EM09, EV09, Kan01, KM02, LD08, AC06, VMS05}.  The subspaces $V_i$ allowed in the model are often constrained. For example the subspaces $V_i$ may be constrained to belong to a family of closed subspaces $\mathcal C$ \cite {ACM08}.  A typical example for $\mathcal H=\R^N$  is when $\mathcal C$ is the set of subspaces of dimension $k<< N$.  If $\mathcal C$ satisfies the so called Minimum Subspace Approximation Property (MSAP), an optimal solution to the non-linear subspace modeling problem that best fit the data exists, and algorithms to find
these subspaces were developed \cite {ACM08}. Necessary and sufficient conditions for $\mathcal C$ to satisfy the MSAP are obtained in \cite{AT10}.

In some applications the  model is a finite union of subspaces and  $\mathcal H$ is finite dimensional. Once the model is found, the given data points can be clustered and classified according to their distances from the subspaces, giving rise to the  so called {\em subspace clustering problem} (see e.g., \cite {CL09} and the references therein). Thus a dual problem is to first find a ``best partition'' of the data. Once this partition is obtained,  the associated optimal subspaces can be easily found. In any case, the search for an optimal partition or optimal subspaces usually involves heavy computations that dramatically increases with  the dimensionality of $\mathcal H$. Thus one important feature is to map the data into a lower dimensional space, and solve the transformed problem in  this lower dimensional space.  If the mapping is chosen appropriately, the original problem can be solved exactly or approximately using the solution of the transformed data.

In this article, we concentrate on the non-linear subspace modeling problem when the  model is a finite union of subspaces of   $\R^N$ of dimension $k<< N$.  Our goal is to find transformations from a high dimensional space to lower dimensional spaces with the aim of solving the subspace modeling problem using the low dimensional transformed data. We find the optimal data partition for the transformed data and use this  partition for the original data  to obtain the subspace model associated to this partition. We then estimate the error between the model thus found and the optimal subspaces model for the original data.

\section{Preliminaries}

Since one of our goals is to model a set of data by a union of subspaces, we first  provide a measure of how well a given set of data can be modeled by a union of subspaces.

We will assume in this article that the data belongs to the finite dimensional space $\R^{N}$.
There is no loss of generality in doing that, since
 it is easy to see that the subspaces of any optimal solution
belong to the span of the data, which is a  finite dimensional subspace of  our
(possible infinite dimensional) Hilbert space. (see \cite{ACHM07}, Lemma 4.2).
So we can assume that the initial Hilbert space is the span of the data.

\begin{definition}\label{def-1}

Given a set of vectors $\F=\{f_1,\dots,f_m\}$ in $\R^N$, a real number $\rho\geq0$ and positive integers $l,k < N$ we will say that the data $\F$ is $(l,k,\rho)$-sparse if there exist subspaces $V_1,\dots,V_l$ of $\R^N$ with $\dim(V_i)\leq k$ for $i=1,\dots,l$, such that
$$
e(\F,\{V_1,\dots,V_l\})=\sum_{i=1}^m \min_{1\leq j\leq l} d^2(f_i,V_j)\leq\rho,
$$
where $d$  stands for the euclidean distance in $\R^N.$

When $\F$ is $(l,k,0)$-sparse, we will simply say that $\F$ is $(l,k)$-sparse.

\end{definition}

Note that if  $\F$ is $(l,k)$-sparse, there exist $l$ subspaces $V_1,\dots,V_l$ of dimension at most $k$, such that
$$\F\subseteq \cup_{i=1}^l V_i.$$

For the general case $\rho>0$, the $(l,k,\rho)$-sparsity of the data implies that $\F$ can be
partitioned into a small number of subsets, in such a way that each subset belongs to or  is at no more than $\rho$-distance from a low dimensional  subspace.
The collection of these subspaces provides an optimal non-linear sparse model for the data.

Observe that if the data $\F$ is  $(l,k,\rho)$-sparse, a model which verifies Definition \ref{def-1} provides a dictionary of length not bigger
than $lk$ (and in most cases much smaller) in which our data can be represented  using at most $k$ atoms
with an error smaller than $\rho$.

More precisely,
let $\{V_1, \dots, V_l\}$ be a collection of subspaces which satisfies Definition \ref{def-1} and $D$ a  set of vectors from $ \bigcup_j V_j$ that is minimal with the property that    its span contains  $ \bigcup_j V_j$.
Then for each $f\in\F$ there exists $\Lambda \subset D$ with $\#\Lambda \leq k$ such that
$$
\|f-\sum_{g\in\Lambda} \alpha_g g\|_2^2 \leq \rho,  \qquad \text{ for some scalars }\alpha_g.
$$

In  \cite{ACM08} the authors studied the problem of finding, for each given set of pairs
$(l,k)$, the minimum $\rho$-sparsity value of the data. They also provided an algorithm for finding the optimal value of $\rho$,
as well as the optimal subspaces associated with $\rho$ and the corresponding optimal partition of the data. Specifically, denote by $\B$ the collection of {\em bundles}  of subspaces of $\R^N$,
$$\B=\{B=\{V_1,\dots,V_l\}\,:\,\dim(V_i)\leq k,\,\,i=1,...,l\},$$
 and for $\F=\{f_1,\dots,f_m\}$ a finite subset of $\R^N$, define
\begin{equation}\label{e-0}
e_0(\F):=\inf\{e(\F,B)\,:\,B\in \B\}.
\end{equation}
As a special case of a general theorem  in \cite{ACM08} we obtain the next theorem.

\begin{theorem}\label{B0}

Let  $\F=\{f_1,\dots,f_m\}$ be vectors in $\R^N$, and let $l$ and $k$ be given $(l<m,\; k<N)$, then there exists a bundle $B_0=\{V^0_1,\dots,V^0_l\}\in \B$ such that
\begin{equation}\label{e-min}
e(\F, B_0)=e_0(\F)=\inf\{e(\F,B)\,:\,B\in \B\}.
\end{equation}
Any bundle $B_0\in\B$ satisfying (\ref{e-min}) will be called an optimal bundle for $\F$.

\end{theorem}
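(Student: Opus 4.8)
The plan is to obtain Theorem~\ref{B0} as a special case of the general existence result from \cite{ACM08}, but since the statement is self-contained it is worth sketching a direct argument. The key reduction is the observation already noted in the preliminaries: for any bundle $B=\{V_1,\dots,V_l\}$, replacing each $V_i$ by its orthogonal projection onto $\Span(\F)$ can only decrease $e(\F,B)$, so we may assume every candidate subspace lies in the finite-dimensional space $W:=\Span(\F)\subseteq\R^N$. Thus $e_0(\F)$ is unchanged if we restrict the infimum to bundles of subspaces of $W$, and we work inside $W$ from now on.

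Next I would fix the partition structure. For a bundle $B$ and a point $f_i$, the quantity $\min_{1\le j\le l} d^2(f_i,V_j)$ depends on which $V_j$ is closest; so for each function $\pi:\{1,\dots,m\}\to\{1,\dots,l\}$ (a partition of the data into at most $l$ labelled classes) define $e(\F,B,\pi)=\sum_{i=1}^m d^2(f_i,V_{\pi(i)})$, and note $e(\F,B)=\min_\pi e(\F,B,\pi)$ and hence $e_0(\F)=\min_\pi\ \inf_{B\in\B} e(\F,B,\pi)$, the outer minimum being over the finitely many partitions $\pi$. So it suffices to show that for each fixed $\pi$ the infimum over $B$ is attained. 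But for fixed $\pi$ the problem decouples across the classes: $\inf_B e(\F,B,\pi)=\sum_{j=1}^l \inf_{\dim V\le k} \sum_{i:\pi(i)=j} d^2(f_i,V)$, and each summand is the classical best $k$-dimensional subspace approximation problem for a finite point set, whose minimizer exists (e.g. by PCA / the Eckart–Young theorem, or by a direct compactness argument on the Grassmannian $G(k',W)$ for $k'\le k$, which is compact). This yields, for each $\pi$, an optimal bundle $B_\pi$ with $e(\F,B_\pi)\le e(\F,B_\pi,\pi)=\inf_B e(\F,B,\pi)$.

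Finally I would assemble the pieces: choose $\pi^\ast$ achieving $\min_\pi \inf_B e(\F,B,\pi)$ and set $B_0=B_{\pi^\ast}$. Then
$$
e_0(\F)\le e(\F,B_0)\le e(\F,B_0,\pi^\ast)=\inf_{B}e(\F,B,\pi^\ast)=\min_\pi\inf_B e(\F,B,\pi)=e_0(\F),
$$
forcing equality throughout, so $B_0$ is an optimal bundle. The main obstacle is the single-subspace existence statement — that among all subspaces of dimension at most $k$ there is one minimizing the sum of squared distances to a given finite set — since everything else is bookkeeping; this is handled either by invoking the MSAP-based machinery of \cite{ACM08} directly (which is the intended route, the theorem being billed as a special case) or, self-containedly, by noting the objective is continuous on the compact Grassmannian and extends continuously across the lower-dimensional strata. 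One should also remark that the constraint $l<m$ guarantees the partition set is nonempty and $k<N$ keeps the problem nontrivial, but neither is essential to the existence argument.
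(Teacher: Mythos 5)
Your argument is correct, but it is not the route the paper takes: the paper offers no proof at all for this statement, presenting it as a special case of the general existence theorem of \cite{ACM08}, where optimality is established for families of closed subspaces satisfying the MSAP (in a possibly infinite-dimensional Hilbert space). Your proof is instead a self-contained, elementary argument tailored to the finite-dimensional setting: you exchange the infimum over bundles with the minimum over the finitely many label assignments $\pi$, decouple the problem class by class, and invoke the classical best $k$-dimensional subspace approximation (Eckart--Young, or compactness of the Grassmannian) to get a minimizer for each class, then reassemble. This buys transparency and independence from the MSAP machinery, at the cost of not generalizing beyond the finite-dimensional, finitely-many-points situation that the ACM08 framework covers; the citation route, conversely, also delivers the algorithmic apparatus the paper later relies on. Two small points to tidy up: the set of subspaces of dimension $\le k$ is a finite union of Grassmannians rather than a single compact manifold, so either take the minimum stratum by stratum or note that one may minimize over subspaces of dimension exactly $\min(k,\dim\Span(\F))$ since enlarging a subspace never increases the error (this replaces the vague ``extends continuously across the lower-dimensional strata''); and the hypothesis $l<m$ plays no role in existence, since partitions with empty classes are allowed, so the parenthetical about nonemptiness of the partition set can simply be dropped.
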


The following relations between partitions of the indices $\{1,\dots,m\}$ and bundles will be relevant for our analysis.

We will denote by $\bold{\Pi}_l(\{1,\dots,m\})$ the set of all $l$-sequences $\S=\{S_1,\dots,S_l\}$ of subsets of $\{1,\dots,m\}$ satisfying the property that for all $1\leq i,j\leq l$,
$$\bigcup_{r=1}^lS_r=\{1,\dots,m\}\quad\text{and}\quad S_i\cap S_j=\emptyset \,\text{ for }\,i\neq j.$$

We want to emphasize that this definition does not exclude the case when some of the $S_i$ are the empty set. By abuse of notation, we will still call the elements of $\bold{\Pi}_l(\{1,\dots,m\})$  {\it partitions} of $\{1,\dots,m\}$.

\begin {definition}
\label{bunpart}Given a bundle $B=\{V_1,\dots,V_l\}\in \B$,  we can split the set $\{1,\dots,m\}$ into a partition  $\S=\{S_1,\dots,S_l\}\in\BPi_l(\{1,\dots,m\})$ with respect to that bundle, by grouping together into $S_i$ the indices of the vectors in $\F$ that are closer to a given subspace $V_i$ than to any other subspace $V_j$, $j\neq i$. Thus, the partitions generated by $B$ are defined by $\S=\{S_1,\dots,S_l\}\in\BPi_l(\{1,\dots,m\})$, where
$$j\in S_i\quad\text{if and only if}\quad d(f_j,V_i)\leq d(f_j,V_h),\quad\forall\,h=1,\dots,l.$$

\end{definition}

We can also associate  to a given partition $\S \in \BPi_l$  the  bundles in $\B$ as follows:

\begin{definition}
Given a partition $\S=\{S_1,\dots,S_l\} \in \BPi_l$,  a bundle $B =\{V_1,\dots,V_l\} \in \B$ is generated by $\S$ if and only if for every $i=1,\dots, l$,
$$ \sum_{j\in S_i} d^2(f_j,V_i) \leq  \sum_{j\in S_i} d^2(f_j,W) \text{ for all subspaces } W \text{ such that dim(}W)\le k.$$

\end{definition}

In this way, for  a given data set $\F$, every bundle has a set of associated partitions (those that are generated by the bundle) and every partition has a set of associated bundles (those that are generated by the partition). Note however, that
the fact that $\S$  is generated by  $B$ does not imply that $B$ is generated by $\S$, and vice versa. However, if $B_0$ is an optimal bundle that solves  the problem  for the data $\F$ as in Theorem \ref{B0}, then  in this case, the partition $\S_0$ generated by $B_0$ also generates $B_0$.
On the other hand not every pair $(B,\S)$  with this property produces the minimal error $e_0(\F).$

Here and subsequently, the partition $\S_0$ generated by the optimal bundle $B_0$ will be called an optimal partition for $\F$.

If $M$ is a set of data and $V$ is a subspace of $\R^N$, we will denote by $E(M,V)$ the mean square error of the data $M$ to the
subspace $V$, i.e.
 \begin{equation}\label{error-E}
 E(M,V) = \sum_{f\in M} d^2(f,V).
 \end{equation}

\section {Main results}

The problem of finding the optimal union of subspaces  that best models a given set of data $\F$ when the dimension of the ambient space $N$ is large is computationally expensive. When the dimension $k$  of the subspaces is considerably smaller than $N$, it is natural to map the data onto a lower-dimensional subspace, solve an associated problem in the lower dimensional space and  map the solution back into the original space. Specifically, given the data set $\F=\{f_1,\dots,f_m\}\subset\R^N$ which is $(l,k,\rho)$-sparse and a sampling matrix  $A\in \R^{r\times N}$, with $r<<N$,  find the optimal partition  of the sampled data $\F':=A(\F)=\{Af_1,\dots,Af_m\}\subset\R^r$, and use this partition to find an approximate solution to the optimal model for $\F$.

\subsection {Dimensionality reduction: The ideal case $\rho=0$}\label{rho=0}

In this section we will assume that the data $\F=\{f_1,\dots,f_m\}\subset\R^N$ is $(l,k)$-sparse, i.e., there exist $l$ subspaces of dimension at most $k$ such that $\F$ lies in the union of these subspaces. For this ideal case, we will show that we can always recover the optimal solution to the original problem from the optimal solution to the problem in the low dimensional space as long as the low dimensional space has dimension $r>k$.

We will begin with the proof that for any sampling matrix $A\in \R^{r\times N}$, the measurements $\F'=A(\F)$ are $(l,k)$-sparse in $\R^r$.

\begin{lemma}\label{lema-datos-exact}

Assume the data $\F=\{f_1,\dots,f_m\}\subset\R^N$ is $(l,k)$-sparse and let $A\in \R^{r\times N}$.
Then $\F':=A(\F)=\{Af_1,\dots,Af_m\}\subset\R^r$ is $(l,k)$-sparse.

\end{lemma}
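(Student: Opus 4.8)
The plan is to show directly that the image subspaces work. Since $\F$ is $(l,k)$-sparse, there exist subspaces $V_1,\dots,V_l \subset \R^N$ with $\dim(V_i)\le k$ such that $\F \subseteq \bigcup_{i=1}^l V_i$. The natural candidate for the sparse model of $\F'$ is the collection $\{A(V_1),\dots,A(V_l)\}$. First I would observe that each $A(V_i)$ is a subspace of $\R^r$ (the image of a subspace under a linear map is a subspace) and that $\dim(A(V_i)) \le \dim(V_i) \le k$, because a linear map cannot increase dimension. This already produces $l$ subspaces of dimension at most $k$ in $\R^r$, which is what the definition of $(l,k)$-sparsity requires.

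Next I would check the covering property. Take any $Af_j \in \F'$. Since $f_j \in \F \subseteq \bigcup_i V_i$, there is some index $i$ with $f_j \in V_i$, hence $Af_j \in A(V_i)$. Therefore $\F' = A(\F) \subseteq \bigcup_{i=1}^l A(V_i)$. By the remark following Definition~\ref{def-1}, exhibiting $l$ subspaces of dimension at most $k$ whose union contains the data is precisely the statement that $\F'$ is $(l,k)$-sparse (equivalently, one computes $e(\F', \{A(V_1),\dots,A(V_l)\}) = \sum_j \min_i d^2(Af_j, A(V_i)) = 0$ since each term vanishes). This completes the argument.

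There is essentially no obstacle here: the lemma is a soft consequence of linearity, and no hypothesis on $A$ (such as injectivity or a lower bound on $r$) is needed. The only point worth stating carefully is that $\dim(A(V_i))$ may be strictly smaller than $\dim(V_i)$ when $A$ is not injective on $V_i$, but since the definition asks only for $\dim \le k$, this causes no difficulty. The content of the section — recovering the \emph{optimal} solution for $\F$ from the optimal solution for $\F'$ — will require genuine hypotheses on $A$ and on $r$; this lemma is merely the preliminary observation that the reduced problem is still of the same type.
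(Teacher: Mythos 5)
Your proposal is correct and follows essentially the same argument as the paper: push forward the subspaces $V_1,\dots,V_l$ covering $\F$ to $A(V_1),\dots,A(V_l)$, note $\dim(A(V_i))\le k$ by linearity, and observe $\F'\subseteq\bigcup_i A(V_i)$, so the error is zero. Nothing is missing.
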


\begin{proof}

Let $V_1^0,\dots,V_l^0$ be optimal spaces for $\F$. Since $$\dim(A(V_i^0))\leq \dim(V_i^0)\leq k\quad\forall\,1\leq i\leq l,$$
and $$\F'\subset\bigcup_{i=1}^lA(V_i^0),$$
it follows that $W:=\{A(V_1^0),\dots,A(V_l^0)\}$ is an optimal bundle for $\F'$ and $e(\F', W)=0$.

\end{proof}

Let $\F=\{f_1,\dots,f_m\}\subset\R^N$ be $(l,k)$-sparse and $A\in \R^{r\times N}$. By Lemma \ref{lema-datos-exact}, $\F'$ is $(l,k)$-sparse.
Thus, there exists an optimal partition
$\S=\{S_1,\dots,S_l\}$ for $\F'$ in $\BPi_l(\{1,\dots,m\})$, such that
$$\F'\subset \bigcup_{i=1}^l  W_i,$$
where $W_i:=\span\{Af_j\}_{j\in S_i}$ and $\dim(W_i)\leq k$. Note that $\{W_1,\dots, W_l\}$ is an optimal bundle for $\F'$.

We can define the bundle $B_{\S}=\{V_1,\dots,V_l\}$ by
\begin{equation}\label{bun-S}
V_i:=\span\{f_j\}_{j\in S_i},\quad\forall\, 1\leq i\leq l.
\end{equation}
Since $\S\in\BPi_l(\{1,\dots,m\})$, we have that
$$\F\subset \bigcup_{i=1}^l V_i.$$
 Thus, the bundle $B_{\S}$ will be optimal for $\F$ if
$\dim(V_i)\leq k,\,\,\forall\,1\leq i\leq l$. The above discussion suggests the following definition:

\begin{definition}\label{def-adm}

Let $\F=\{f_1,\dots,f_m\}\subset\R^N$ be $(l,k)$-sparse. We will call a matrix  $A\in \R^{r\times N}$ \textit{admissible} for $\F$ if for every optimal partition $\S$ for $\F'$, the bundle $B_{\S}$ defined by (\ref{bun-S})  is optimal for $\F$.

\end{definition}

The next proposition states that almost all $A\in \R^{r\times N}$ are admissible for $\F$.

The Lebesgue measure of a set $E\subset \R^q$ will be denoted by $|E|$.

\begin{proposition}\label{prop-adm-0}

Assume the data $\F=\{f_1,\dots,f_m\}\subset\R^N$ is $(l,k)$-sparse and let $r>k$. Then, almost all $A\in \R^{r\times N}$ are admissible for $\F$.

\end{proposition}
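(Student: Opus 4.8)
The plan is to show that the set of non-admissible matrices is contained in a finite union of proper algebraic subvarieties of $\R^{r\times N}$, hence has Lebesgue measure zero. The key observation is that admissibility can fail only through a drop of rank: a matrix $A$ fails to be admissible precisely when, for some optimal partition $\S=\{S_1,\dots,S_l\}$ of $\F'$, the bundle $B_\S$ defined by $V_i=\Span\{f_j\}_{j\in S_i}$ has some $\dim(V_i)>k$. So I would first reduce to a statement purely about the original data: since $\F$ is $(l,k)$-sparse, there is at least one partition $\S^*$ (coming from an optimal bundle for $\F$) for which every $\dim(\Span\{f_j\}_{j\in S^*_i})\le k$; the danger is only that passing to $\F'=A(\F)$ could create \emph{new} optimal partitions whose associated spans in $\R^N$ are too big.

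Next I would enumerate the finitely many partitions $\S\in\BPi_l(\{1,\dots,m\})$ that are ``bad'' for $\F$, meaning $\dim(\Span\{f_j\}_{j\in S_i})>k$ for some $i$; call this finite collection $\mathcal{P}_{\mathrm{bad}}$. For a fixed bad $\S$, I claim that for all $A$ outside a measure-zero set, $\S$ cannot be an optimal partition for $\F'$. Indeed, if $\S$ is bad, fix an $i$ with $\dim(\Span\{f_j\}_{j\in S_i})=k'>k$; then there exist $k'$ indices $j_1,\dots,j_{k'}\in S_i$ with $f_{j_1},\dots,f_{j_{k'}}$ linearly independent in $\R^N$. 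The set of $A\in\R^{r\times N}$ for which $Af_{j_1},\dots,Af_{j_{k'}}$ are linearly \emph{dependent} in $\R^r$ is the zero set of all $k'\times k'$ minors of the $r\times k'$ matrix $A[f_{j_1}\,\cdots\,f_{j_{k'}}]$; since $r>k$, i.e. $r\ge k'$ is \emph{not} guaranteed — wait, we only know $r>k$, so $r\ge k+1$, but $k'$ could exceed $r$. If $k'>r$ then $Af_{j_1},\dots,Af_{j_{k'}}$ are automatically dependent; in that case I argue differently: take instead $r$ of these independent vectors (possible since $k'>r$, so there are at least $r$ of them, in fact $k'\ge k+1 > r$ is false — let me just say $k' \ge k+1$ and $r \ge k+1$, so $\min(k',r)\ge k+1$) and note the set of $A$ making \emph{those} $\min(k',r)$ vectors dependent is a proper subvariety (some maximal minor is a nonzero polynomial in the entries of $A$, because the $f_j$'s are independent). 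Either way, off a measure-zero set $N_\S$, the images $\{Af_j\}_{j\in S_i}$ span a space of dimension $\min(k',r)\ge k+1>k$, so the bundle $\{W_1,\dots,W_l\}$ with $W_h=\Span\{Af_j\}_{j\in S_h}$ is \emph{not} a $(l,k)$-bundle, hence $\S$ cannot be an optimal partition for $\F'$ (whose optimal error is $0$, attained by some genuine $(l,k)$-bundle by Lemma \ref{lema-datos-exact}).

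Finally I would take $N_0=\bigcup_{\S\in\mathcal{P}_{\mathrm{bad}}}N_\S$, a finite union of measure-zero sets, hence $|N_0|=0$. For $A\notin N_0$: every optimal partition $\S$ for $\F'$ is not in $\mathcal{P}_{\mathrm{bad}}$, so $\dim(\Span\{f_j\}_{j\in S_i})\le k$ for all $i$, which by the discussion preceding Definition \ref{def-adm} (namely $\F\subset\bigcup V_i$ together with the dimension bound forces $B_\S$ optimal for $\F$, as $e_0(\F)=0$) means $B_\S$ is optimal for $\F$. Thus $A$ is admissible.

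The main obstacle I anticipate is the bookkeeping around the case $k'>r$: one must be a little careful that the relevant minor of $A$ is a genuinely nonzero polynomial (not identically zero) so that its zero set has measure zero, and this is exactly where the hypothesis $r>k$ is used — it guarantees that even a bad span of dimension $>k$ maps, for generic $A$, onto a space of dimension at least $k+1$, which already suffices to rule it out. A secondary point to get right is that there are only finitely many partitions, so the union over $\mathcal{P}_{\mathrm{bad}}$ stays null; this is immediate since $\BPi_l(\{1,\dots,m\})$ is finite.
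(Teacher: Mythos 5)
Your proposal is correct and follows essentially the same route as the paper's proof: both confine the non-admissible matrices to a finite union of determinantal zero sets — the paper indexes over the finitely many subspaces $V=\Span\{f_j\}_{j\in S}$ of dimension $>k$ and uses non-vanishing of $\det(V^*A^*AV)$, while you index over the finitely many bad partitions and use maximal minors — with $r>k$ guaranteeing the relevant polynomial is non-trivial. Your extra care in the case $k'>r$ (restricting to $\min(k',r)\ge k+1$ independent vectors) is sound and in fact handles a point the paper glosses over, since $\det(V^*A^*AV)$ is identically zero when $\dim V>r$.
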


\begin{proof}

If a matrix $A\in \R^{r\times N}$ is not admissible, there exists an optimal partition $\S\in\BPi_l$ for $\F'$ such that the bundle $B_{\S}=\{V_1,\dots,V_l\}$ is not optimal for $\F$.

Let $\Di_k$ be the set of all the subspaces $V$ in $\R^N$ of dimension bigger than $k$, such that
$V=\span{\{f_j\}_{j\in S}}$ with $S\subset \{1,\dots,m\}.$

Thus, we have that the set of all the matrices of $ \R^{r\times N}$ which are not admissible for $\F$ is contained in the set
$$\bigcup_{V\in \Di_k}\{A\in \R^{r\times N}\,:\,\dim(A(V))\leq k\}.$$

Note that the set $\Di_k$ is finite, since there are finitely many subsets of $ \{1,\dots,m\}.$ Therefore, the proof of the proposition is complete by showing that for a fixed subspace $V\subset\R^N$, such that $\dim(V)>k$, it is true that
\begin{equation}\label{dim0-1}
|\{A\in \R^{r\times N}\,:\,\dim(A(V))\leq k\}|=0.
\end{equation}

Let then $V$ be a subspace such that $\dim(V)=t >k.$
Given $\{v_1,\dots,v_t\}$ a basis for $V$, by abuse of notation, we continue to write $V$ for the matrix in $\R^{N\times t}$ with vectors $v_i$ as columns.  Thus, proving (\ref{dim0-1}) is equivalent to proving that
\begin{equation}\label{dim0-2}
|\{A\in \R^{r\times N}\,:\,\text{rank}(AV)\leq k\}|=0.
\end{equation}

As $\min\{r,t\}>k$,  the set $\{A\in \R^{r\times N}\,:\,\text{rank}(AV)\leq k\}$ is included in
\begin{equation}\label{pol-ecu-0}
\{A\in \R^{r\times N}\,:\,\text{det}(V^*A^*AV)=0\}.
\end{equation}
Since $\text{det}(V^*A^*AV)$ is a non-trivial polynomial in the $r\times N$ coefficients of $A$, the set (\ref{pol-ecu-0}) has Lebesgue measure zero. Hence,  (\ref{dim0-2}) follows.

\end{proof}

\subsection{Dimensionality reduction: The non-ideal case $\rho>0$}
\label {NIC}
Even if a set of data is drawn from a union of subspaces, in practice it is often corrupted by noise. Thus, in general $\rho>0$, and our goal is to  estimate the error produced when we solve the associated problem in the lower dimensional space and map the solution back into the original space.

Intuitively, if $A \in \R^{r\times N}$ is an arbitrary matrix, the set $\F^{'} = A\F$ will preserve the original sparsity only if the matrix $A$ does not change the geometry of the data in an essential way. One can think that in the {\em ideal} case, since the data is sparse, it actually lies in an union of low dimensional subspaces (which is a very thin set in the ambient space).

However, when the data is not 0-sparse, but only $\rho$-sparse with $\rho >0$, the optimal subspaces plus the data do not lie in a thin set. This is the main obstacle in order to obtain an analogous result as in the ideal case.

Far from having the result that for {\em almost any} matrix $A$ the geometry of the data will be preserved, we have the Johnson-Lindenstrauss lemma, that guaranties - for a given data set - the existence of {\em one} such matrix $A$.

In what follows, we will use random matrices to obtain positive results for the $\rho > 0$ case. 

Let $(\Omega,\pr)$ be a probability measure space. Given $r, N\in\N$, a random matrix  $A_{\w}\in \R^{r\times N}$ is a matrix with entries  $(A_{\w})_{i,j}=a_{i,j}(\w)$, where $\{a_{i,j}\}$ are independent and identically distributed random variables for every $1\leq i\leq r$ and $1\leq j\leq N$.

\begin{definition}
We say that  a random matrix $A_{\w}\in \R^{r\times N}$ satisfies the concentration inequality if for every  $0< \e <1$, there exists $c_0=c_0(\e)>0$ (independent of $r,N$) such that for any $x\in\R^N$,
\begin{equation}\label{rm}
\pr\Big((1-\e)\|x\|_2^2\leq\|A_{\w}x\|_2^2\leq(1+\e)\|x\|_2^2\Big)\geq 1-2e^{-rc_0}
\end{equation}

\end{definition}

Such matrices are easy to come by as
the next proposition shows \cite{Ach03}.

\begin{proposition}\label{prop-gb}

Let $A_{\w}\in\R^{r\times N}$ be a random matrix whose entries are chosen independently from either $\mathcal{N}(0,\frac{1}{r})$ or $\{\frac{-1}{\sqrt{r}},\frac{1}{\sqrt{r}}\}$ Bernoulli.
Then  $A_{\w}$ satisfies (\ref{rm}) with $c_0(\e)=\frac{\e^2}{4}-\frac{\e^3}{6}$.
\end{proposition}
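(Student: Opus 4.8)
The plan is to prove Proposition~\ref{prop-gb} by reducing the two-sided concentration estimate \eqref{rm} to a standard moment-generating-function (Chernoff) bound applied to the random variable $\|A_\w x\|_2^2$. First I would reduce to the case $\|x\|_2 = 1$: both sides of \eqref{rm} are homogeneous under scaling $x$, since $\|A_\w x\|_2^2$ scales quadratically, so it suffices to fix a unit vector $x$ and show $\pr\big((1-\e)\leq \|A_\w x\|_2^2 \leq (1+\e)\big)\geq 1 - 2e^{-rc_0}$. Write $Y_i = \sum_{j=1}^N a_{i,j}x_j$ for $1\leq i\leq r$; then $Y_1,\dots,Y_r$ are i.i.d., and $\|A_\w x\|_2^2 = \sum_{i=1}^r Y_i^2$. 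In the Gaussian case, $a_{i,j}\sim\mathcal N(0,1/r)$, so $Y_i\sim\mathcal N(0,1/r)$ and $r\sum_i Y_i^2$ has a $\chi^2_r$ distribution; in the Bernoulli case $a_{i,j}\in\{\pm 1/\sqrt r\}$ with equal probability, so $Y_i$ is a normalized Rademacher sum.

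Next I would bound the upper tail $\pr\big(\sum_i Y_i^2 \geq 1+\e\big)$ and the lower tail $\pr\big(\sum_i Y_i^2 \leq 1-\e\big)$ separately, each by $e^{-rc_0}$ with $c_0 = \e^2/4 - \e^3/6$, so that the union bound gives the factor $2$. For the upper tail one uses Markov's inequality applied to $e^{\la r \sum_i Y_i^2}$ for a suitable $\la>0$: by independence, $\pr\big(\sum_i Y_i^2 \geq 1+\e\big) \leq e^{-\la r(1+\e)}\,\big(\E\, e^{\la r Y_1^2}\big)^r$, and then one needs the bound $\E\, e^{\la r Y_1^2} \leq (1-2\la)^{-1/2}$. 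In the Gaussian case this is the exact MGF of a (scaled) $\chi^2_1$ variable; in the Bernoulli case one verifies the same inequality holds for the normalized Rademacher sum (this is the standard comparison used by Achlioptas, and is precisely where the slightly different constant in \cite{Ach03} versus the Gaussian case is absorbed). Optimizing the resulting expression $-\la(1+\e) - \tfrac12\log(1-2\la)$ over $\la$, the optimal choice is $\la = \tfrac12\big(1 - \tfrac1{1+\e}\big)$, which yields exponent $\tfrac12\big(\e - \log(1+\e)\big) \geq \tfrac12\big(\e - (\e - \e^2/2 + \e^3/3)\big) = \e^2/4 - \e^3/6$ using $\log(1+\e)\leq \e - \e^2/2 + \e^3/3$ for $\e\in(0,1)$. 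The lower tail is symmetric: apply Markov to $e^{-\la r\sum_i Y_i^2}$ and optimize similarly, obtaining exponent $\tfrac12\big(-\e - \log(1-\e)\big)$, which dominates $\e^2/4 - \e^3/6$ via the series $-\log(1-\e) = \e + \e^2/2 + \e^3/3 + \cdots \geq \e + \e^2/2$.

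The main obstacle, and the only nonroutine point, is the Bernoulli case: establishing $\E\, e^{\la Z^2}\leq (1-2\la)^{-1/2}$ (after the appropriate rescaling) for $Z$ a normalized sum of Rademacher variables rather than a genuine Gaussian. This is handled by a direct expansion of the exponential, comparing $\E Z^{2k}$ termwise against the Gaussian moments $(2k-1)!!$, using the fact that a Rademacher sum has no larger even moments than a Gaussian of the same variance; alternatively one invokes the exponential-moment comparison lemma in \cite{Ach03} directly. Since the statement explicitly cites \cite{Ach03}, I would present the Gaussian computation in detail and then remark that the Bernoulli case follows from the same Chernoff argument once the MGF bound $\E\, e^{\la r Y_1^2}\leq (1-2\la)^{-1/2}$ is in hand, citing \cite{Ach03} for that inequality. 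Everything else is the convexity/series estimates $\log(1+\e)\leq \e - \e^2/2 + \e^3/3$ and $-\log(1-\e)\geq \e + \e^2/2$, valid for $0<\e<1$, which produce exactly the advertised constant $c_0(\e) = \e^2/4 - \e^3/6$.
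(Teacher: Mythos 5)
First, note that the paper itself does not prove this proposition: it is imported verbatim from \cite{Ach03} (and, for the Gaussian case, it is the classical Dasgupta--Gupta/Johnson--Lindenstrauss computation), so you are supplying an argument the paper omits. Your Gaussian argument is correct and complete: the reduction to $\|x\|_2=1$, the identification of $r\|A_\w x\|_2^2$ with a $\chi^2_r$ variable, the exact MGF $(1-2\lambda)^{-1/2}$, the optimal choices $\lambda=\varepsilon/(2(1+\varepsilon))$ and $\lambda=\varepsilon/(2(1-\varepsilon))$, and the series estimates do give the exponents $\tfrac12(\varepsilon-\log(1+\varepsilon))\ge \varepsilon^2/4-\varepsilon^3/6$ and $\tfrac12(-\varepsilon-\log(1-\varepsilon))\ge \varepsilon^2/4$, hence the stated $c_0(\varepsilon)$ after a union bound. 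The Bernoulli \emph{upper} tail is also fine as you describe it: for $Z=\sum_j \varepsilon_j x_j$ with $\|x\|_2=1$ one has $\mathbb{E}\,Z^{2k}\le (2k-1)!!$ termwise, and since $e^{\lambda Z^2}$ has a power series with nonnegative coefficients this yields $\mathbb{E}\,e^{\lambda Z^2}\le (1-2\lambda)^{-1/2}$ for $0\le\lambda<1/2$.

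The genuine gap is the Bernoulli \emph{lower} tail, which you dismiss as ``symmetric.'' That step needs $\mathbb{E}\,e^{-\lambda Z^2}\le (1+2\lambda)^{-1/2}$, i.e.\ domination by the Gaussian MGF at \emph{negative} exponent, and this does not follow from moment domination: the series for $e^{-\lambda Z^2}$ alternates in sign, so $\mathbb{E}\,Z^{2k}\le (2k-1)!!$ gives nothing termwise. In fact the inequality is false in general: for $Z=(\varepsilon_1+\varepsilon_2)/\sqrt2$ one has $\mathbb{E}\,e^{-\lambda Z^2}=(1+e^{-2\lambda})/2$, which exceeds $(1+2\lambda)^{-1/2}$ already at $\lambda=2$ (and $\lambda=2$ is exactly the value $\varepsilon/(2(1-\varepsilon))$ your symmetric optimization selects when $\varepsilon=4/5$). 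This is precisely the nonroutine point in \cite{Ach03}: Achlioptas handles the lower tail by a different argument, bounding $\mathbb{E}\,e^{-\lambda Z^2}\le 1-\lambda\mathbb{E}Z^2+\tfrac{\lambda^2}{2}\mathbb{E}Z^4$ together with $\mathbb{E}Z^4\le 3$, and then optimizing carefully so that the resulting exponent still reaches $\varepsilon^2/4-\varepsilon^3/6$. So either reproduce that separate lower-tail argument (or restrict $\lambda$ to a range where a negative-exponent MGF bound can be proved and check the exponent survives), or cite \cite{Ach03} for the full Bernoulli statement rather than only for the positive-exponent MGF comparison; as written, reducing the Bernoulli case to the single inequality $\mathbb{E}\,e^{\lambda r Y_1^2}\le(1-2\lambda)^{-1/2}$ does not prove the proposition.
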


By using random matrices $A_{\omega}$ satisfying (\ref{rm}) to produce the lower dimensional data set $\F^{'}$, we will be able to recover with high probability an optimal partition for $\F$ using the optimal partition of $\F^{'}$.

Below we will state the main results of Section \ref{NIC} and we will give their proofs in Section \ref{proofs}.

Note that by Lemma \ref{lema-datos-exact}, if  $\F=\{f_1,\dots,f_m\}\subset\R^N$  is $(l,k,0)$-sparse, then $A_{\w}(\F)$ is $(l,k,0)$-sparse for all $\w\in\Omega$. The following proposition is a generalization of Lemma \ref{lema-datos-exact} to the case where $\F$ is $(l,k,\rho)$-sparse with $\rho>0$.

\begin{proposition}
\label {P36}
Assume the data $\F=\{f_1,\dots,f_m\}\subset\R^N$  is $(l,k,\rho)$-sparse with $\rho>0$. If $A_{\w}\in\R^{r\times N}$ is a random matrix which satisfies  (\ref{rm}), then $A_{\w}\F$ is $(l,k,(1+\e)\rho)$-sparse with probability at least $1-2me^{-rc_0}$.
\end{proposition}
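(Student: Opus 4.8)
The plan is to start from the hypothesis that $\F$ is $(l,k,\rho)$-sparse, which gives subspaces $V_1^0,\dots,V_l^0$ of dimension at most $k$ and a partition $\S=\{S_1,\dots,S_l\}$ of $\{1,\dots,m\}$ with $j\in S_i$ implying $d(f_j,V_i^0)\le d(f_j,V_h^0)$ for all $h$, such that
$$
\sum_{i=1}^l\sum_{j\in S_i} d^2(f_j,V_i^0) = e(\F,\{V_1^0,\dots,V_l^0\})\le\rho.
$$
The natural candidate bundle for $\F'=A_\w\F$ is $\{A_\w(V_1^0),\dots,A_\w(V_l^0)\}$, whose subspaces still have dimension at most $k$. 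For each $i$ and each $j\in S_i$, write $f_j = p_j + n_j$ where $p_j$ is the orthogonal projection of $f_j$ onto $V_i^0$ and $n_j=f_j-p_j$, so that $\|n_j\|_2^2 = d^2(f_j,V_i^0)$. Then $A_\w f_j = A_\w p_j + A_\w n_j$ with $A_\w p_j\in A_\w(V_i^0)$, hence $d^2(A_\w f_j, A_\w(V_i^0))\le \|A_\w n_j\|_2^2$.

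Next I would apply the concentration inequality (\ref{rm}) to each of the $m$ vectors $n_j$ (one $j$ for each index, using the subspace $V_i^0$ with $j\in S_i$): for each $j$, with probability at least $1-2e^{-rc_0}$ we have $\|A_\w n_j\|_2^2\le(1+\e)\|n_j\|_2^2$. Taking a union bound over the $m$ indices, with probability at least $1-2me^{-rc_0}$ this bound holds simultaneously for all $j=1,\dots,m$. On that event,
$$
e(\F',\{A_\w(V_1^0),\dots,A_\w(V_l^0)\}) \le \sum_{i=1}^l\sum_{j\in S_i} d^2(A_\w f_j, A_\w(V_i^0)) \le \sum_{i=1}^l\sum_{j\in S_i}(1+\e)\|n_j\|_2^2 \le (1+\e)\rho,
$$
so $\F'$ is $(l,k,(1+\e)\rho)$-sparse, which is exactly the claim. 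I should be slightly careful that $d^2(A_\w f_j, A_\w(V_i^0))$ is the distance to the image subspace (not to $A_\w$ applied to the nearest point, which need not be the nearest point of the image), but the inequality $d^2(A_\w f_j, A_\w(V_i^0))\le\|A_\w n_j\|_2^2$ holds regardless since $A_\w p_j$ is a particular element of $A_\w(V_i^0)$.

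There is no serious obstacle here: the only mild subtlety is the union bound bookkeeping — the concentration inequality gives a single-vector statement, so one must be explicit that applying it to the $m$ residual vectors $n_1,\dots,n_m$ and intersecting the $m$ good events costs a factor $m$ in the failure probability, yielding the stated $1-2me^{-rc_0}$. One could also note that this does \emph{not} assert the image of an optimal bundle for $\F$ is optimal for $\F'$, only that it witnesses $(l,k,(1+\e)\rho)$-sparsity; the optimal error $e_0(\F')$ is then bounded above by $(1+\e)\rho$ by Theorem \ref{B0}, which is all that is needed.
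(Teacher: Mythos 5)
Your proof is correct and is essentially the paper's argument: the paper proves Proposition \ref{P36} as a direct consequence of Lemma \ref{lem-e}, whose proof is exactly your computation — project each $f_j$ onto its assigned subspace, bound $d^2(A_\w f_j, A_\w V_i^0)$ by $\|A_\w(f_j-P_{V_i^0}f_j)\|_2^2$, apply the concentration inequality (\ref{rm}) to the $m$ residuals, and union bound to get the factor $2me^{-rc_0}$. The only cosmetic difference is that the paper states the intermediate result as $e_0(A_\w\F)\le(1+\e)e_0(\F)$ using an optimal bundle for $\F$, whereas you work directly with a bundle witnessing $(l,k,\rho)$-sparsity; the argument is the same.
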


Hence if  the data is mapped with a random matrix which satisfies the concentration inequality, then  with high probability, the sparsity of the transformed data is close to the sparsity of the original data.
Further, as the following theorem shows, we obtain  an estimation for the error between $\F$ and the bundle generated by the optimal partition for $\F'=A_{\w}\F$.

Note that, given a constant  $\alpha>0$, the scaled data $\alpha\F=\{\alpha f_1,\dots,\alpha f_m\}$ satisfies that  $e(\alpha\F,B)=\alpha^2e(\F,B)$ for any bundle $B$.  So, an optimal bundle for $\F$ is optimal for $\alpha\F$, and vice versa.  Therefore, we can assume that the data $\F=\{f_1,\dots,f_m\}$ is {\it normalized}, that is, the  matrix $M\in\R^{N\times m}$ which has the vectors $\{f_1,\dots,f_m\}$ as columns has unitary Frobenius norm. Recall that the Frobenius norm of a matrix $M\in\R^{N\times m}$ is defined by
\begin{equation}\label{fr-norm}
\|M\|^2:=\sum_{i=1}^{N}\sum_{j=1}^{m}M_{i,j}^2,
\end{equation}
where $M_{i,j}$ are the coefficients of $M$.

\begin{theorem}\label{theo-final}
\label {T37}
Let $\F=\{f_1,\dots,f_m\}\subset\R^N$ be a normalized data set and $0< \e <1$. Assume that $A_{\w}\in\R^{r\times N}$ is a random matrix  satisfying (\ref{rm}) and $\S_{\w}$ is an optimal partition for $\F'=A_{\w}\F$ in $\R^r$. If  $B_{\w}$ is a bundle generated by the partition $\S_{\w}$ and the data $\F$ in $\R^N$ as in Definition \ref {bunpart}, then with probability exceeding $1-(2 m^2+4m)e^{-rc_0}$, we have
\begin{equation}\label{teo-f}
e(\F,B_{\w})\leq (1+\e)e_0(\F)+\e c_1,
\end{equation}
where  $c_1=(l (d-k))^{1/2}$ and  $d=\rg(\F)$.

\end{theorem}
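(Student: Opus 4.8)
The plan is to compare the error $e(\F,B_\w)$ of the bundle $B_\w$ (generated in $\R^N$ by the optimal low-dimensional partition $\S_\w$) to the optimal error $e_0(\F)$, by pushing everything through the matrix $A_\w$ and using the concentration inequality (\ref{rm}) on a suitably chosen finite collection of vectors. First I would fix an optimal bundle $B_0=\{V_1^0,\dots,V_l^0\}$ for $\F$ with optimal partition $\S_0=\{S_1^0,\dots,S_l^0\}$, so that $e(\F,B_0)=e_0(\F)$. The key chain of inequalities I expect is: (i) $e(\F,B_\w)$ is controlled by projecting $\F$ onto the subspaces of $B_\w$; (ii) since $\S_\w$ is \emph{optimal} for $\F'=A_\w\F$, the error $e(\F',B_{\S_\w}')$ of the low-dimensional bundle generated by $\S_\w$ is at most $e(\F',B_{A_\w\S_0}')$, i.e.\ at most the error of pushing forward $B_0$; (iii) $e(\F',A_\w B_0)\le (1+\e)e(\F,B_0)=(1+\e)e_0(\F)$ with high probability, because distances of the $f_j$'s to the $V_i^0$'s are realized by finitely many vectors (the residuals $f_j-P_{V_i^0}f_j$) to which (\ref{rm}) applies; (iv) finally, one must go back down: $e(\F,B_\w)$ must be bounded using $e(\F',B_\w')$, and here the lower bound in (\ref{rm}) gives $(1-\e)\,d^2(f_j,(B_\w)_i)\le d^2(A_\w f_j,(B_\w')_i)$ — but only after arguing that $A_\w$ maps the relevant subspace of $B_\w$ appropriately, which is the delicate point.

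The main obstacle, and the reason the extra term $\e c_1$ with $c_1=(l(d-k))^{1/2}$ appears, is step (iv): the subspaces of $B_\w$ live in $\R^N$ and are spanned by data vectors, so $A_\w$ restricted to $V_i=\span\{f_j\}_{j\in (S_\w)_i}$ need not be injective, and even when it is, $d(A_\w f_j, A_\w V_i)$ is not simply comparable to $d(f_j,V_i)$ for an arbitrary point because the concentration inequality controls norms of \emph{fixed} vectors, not norms after an oblique projection onto a data-dependent subspace. I would handle this by the standard device of applying (\ref{rm}) not just to the $m$ data vectors but to a finite net built from an orthonormal basis of each candidate data-spanned subspace: concretely, for each subset $S\subset\{1,\dots,m\}$ one fixes an orthonormal basis of $\span\{f_j\}_{j\in S}$ and applies the concentration inequality to these basis vectors together with the data, so that with probability at least $1-(2m^2+4m)e^{-rc_0}$ the map $A_\w$ is a near-isometry ($\e$-distortion) simultaneously on all these $O(m^2)$ vectors — this is where the $m^2$ in the probability bound comes from. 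On this good event, $A_\w$ distorts inner products on each data-spanned subspace of dimension $\le d$ by at most $O(\e)$, and a perturbation estimate for orthogonal projections then yields $d^2(A_\w f_j, A_\w V_i)\ge (1-\e)\,d^2(f_j,V_i) - (\text{error})$, where the error term, summed over the at most $l$ subspaces each of codimension $\ge \dim V_i - k$ inside a $d$-dimensional span, accumulates to something bounded by $\e\,(l(d-k))^{1/2}$ after using the normalization $\|M\|_F=1$ and Cauchy–Schwarz on the singular values.

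So the skeleton is: (1) set up $B_0,\S_0$ and the residual vectors; (2) define the good event as simultaneous $\e$-distortion of $A_\w$ on the data and on orthonormal bases of all subspaces in $\Di_k$ (or all data-spanned subspaces of dimension $\le d$), and bound its failure probability by $(2m^2+4m)e^{-rc_0}$ via a union bound over $\le m^2+2m$ applications of (\ref{rm}); (3) on the good event, prove the upper pushforward bound $e(A_\w\F, \S_\w\text{-bundle}) \le e(A_\w\F, A_\w B_0)\le (1+\e)e_0(\F)$ using optimality of $\S_\w$ downstairs and the upper half of (\ref{rm}); (4) prove the downward comparison $(1-\e)e(\F,B_\w)\le e(A_\w\F,B_\w') + \e c_1$ using the lower half of (\ref{rm}) plus the projection-perturbation lemma and the rank bound $d=\rg(\F)$; (5) combine (3) and (4), divide by $(1-\e)$ if necessary and absorb constants, to reach (\ref{teo-f}) — in the write-up one would be slightly careful that the stated clean form $(1+\e)e_0(\F)+\e c_1$ rather than a $(1+\e)/(1-\e)$ factor comes out, which typically requires either a mild adjustment of constants or running the lower-bound step with $\e$ replaced by a comparable quantity. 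I expect step (4), the projection-perturbation estimate producing exactly the constant $(l(d-k))^{1/2}$, to be the technical heart of the argument; everything else is bookkeeping with the concentration inequality and union bounds.
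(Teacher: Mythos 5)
Your steps (ii)--(iii) are sound and coincide with the paper's Lemma \ref{lem-e}: pushing the optimal bundle $B_0$ forward and using the upper half of (\ref{rm}) on the residuals $f_j-P_{V_i^0}f_j$ gives $e_0(A_\w\F)\le(1+\e)e_0(\F)$ with probability $1-2me^{-rc_0}$. The genuine gap is your step (iv), which is not how the extra term arises and, as designed, would fail. The paper never ``goes back down'' by comparing distances of points to subspaces across the two ambient spaces. Instead it observes that, since $B_\w$ is generated by $\S_\w$, $e(\F,B_\w)\le\sum_i E_k(M_\w^i)$, and since $\S_\w$ is optimal downstairs, $\sum_i E_k(A_\w M_\w^i)=e_0(A_\w\F)$; then each difference $|E_k(M_\w^i)-E_k(A_\w M_\w^i)|$ is bounded by $(d-k)^{1/2}\|M_\w^{i*}M_\w^i-M_\w^{i*}A_\w^*A_\w M_\w^i\|$ via Eckart--Young (\ref{E-aut}) combined with the Hermitian eigenvalue-perturbation bound of Corollary \ref{cor-Bha}, i.e.\ by comparing tail eigenvalue sums of the two Gram matrices, which live in the same space $\R^{m_\w^i\times m_\w^i}$ regardless of $N$ versus $r$. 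Because these group Gram matrices are submatrices of $M^*M$ and $M^*A_\w^*A_\w M$, the single event $\|M^*M-M^*A_\w^*A_\w M\|\le\e$ of Proposition \ref{prop-norm} (an $O(m^2)$ union bound over data pairs only) controls all groups of the random partition simultaneously, and Cauchy--Schwarz over the $l$ blocks yields exactly $\e(l(d-k))^{1/2}$ with no $(1-\e)$ division.

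Concretely, your plan breaks at three points. First, controlling inner products of the $m$ data vectors does not give $\e$-distortion of $A_\w$ on the data-spanned subspaces: passing to an orthonormal basis of $\span\{f_j\}_{j\in S}$ multiplies the Gram error by the (possibly huge) condition factor of the group, so the near-isometry you invoke cannot be extracted from an $O(m^2)$ union bound; and since $\S_\w$ is random you cannot fix the relevant subsets in advance, so an honest union bound runs over all $2^m$ subsets (or over nets of their unit spheres), destroying the claimed probability $1-(2m^2+4m)e^{-rc_0}$ --- the $m^2$ in the paper comes from Proposition \ref{prop-norm}, not from bases of subspaces. Second, the theorem imposes no relation between $r$ and $d=\rg(\F)$: if $\dim\span\{f_j\}_{j\in S_\w^i}>r$, then $A_\w$ restricted to that span has a nontrivial kernel, no near-isometry or pullback of the optimal downstairs subspace exists on any event, yet the statement (and the Gram-eigenvalue proof) still holds; your step (iv) has no way to proceed in that regime. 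Third, even where it works, your route produces a bound of the form $\bigl((1+\e)e_0(\F)+\e c_1'\bigr)/(1-\e)$ with an unspecified projection-perturbation constant; repairing the form by rescaling $\e$ changes $c_0(\e)$ and hence the stated probability, so the exact conclusion (\ref{teo-f}) is not reached. The missing idea is precisely Lemma \ref{lem-Ew}: compare optimal $k$-term fit errors through eigenvalues of Gram matrices rather than through an embedding of the subspaces themselves.
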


Finally, we can use this theorem to show that the set of matrices which are {\em $\eta$-admissible} (see definition below) is large.

The following definition generalizes Definition \ref{def-adm} to the $\rho$-sparse setting, with $\rho>0$.

\begin{definition}

Assume $\F=\{f_1,\dots,f_m\}\subset\R^N$ is $(l,k,\rho)$-sparse and let $0<\eta <1$. We will say that a matrix $A\in\R^{r\times N}$ is $\eta$-{\it admissible} for $\F$ if for any optimal partition $\S$  for $\F'=A\F$ in $\R^r$, the bundle $B_{\S}$ generated by $\S$ and $\F$ in $\R^N$, satisfies
$$e(\F,B_{\S})\leq \rho+\eta.$$

\end{definition}

We have the following generalization of Proposition \ref{prop-adm-0}, which provides an estimate on the size of the set of $\eta$-admissible matrices.

\begin{corollary}

Let $\F=\{f_1,\dots,f_m\}\subset\R^N$  be a normalized data set and $0<\eta<1$. Assume that $A_{\w}\in\R^{r\times N}$ is a random matrix which satisfies property (\ref{rm}) for  $\e= \eta\; (1+\sqrt{l (d-k)})^{-1}$.
Then $A_{\w}$ is $\eta$-admissible for $\F$ with probability at least $1-(2m^2+4m)e^{-r c_0(\e)}.$

\end{corollary}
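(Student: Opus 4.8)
The plan is to derive this directly from Theorem \ref{T37} by choosing $\e$ to convert the multiplicative-plus-additive error bound into the purely additive bound $\rho+\eta$ demanded by $\eta$-admissibility. First I would recall that, since $\F$ is $(l,k,\rho)$-sparse, Theorem \ref{B0} gives $e_0(\F)\le\rho$. Let $\S$ be any optimal partition for $\F'=A_{\w}\F$ and let $B_{\S}$ be the bundle generated by $\S$ and $\F$ in $\R^N$ as in Definition \ref{bunpart}. Applying Theorem \ref{T37} with this partition (its hypotheses are exactly that $\F$ is normalized, $0<\e<1$, and $A_{\w}$ satisfies (\ref{rm})), we get, with probability exceeding $1-(2m^2+4m)e^{-rc_0(\e)}$,
\[
e(\F,B_{\S})\le(1+\e)e_0(\F)+\e c_1\le(1+\e)\rho+\e c_1=\rho+\e(\rho+c_1),
\]
where $c_1=(l(d-k))^{1/2}$ and $d=\rg(\F)$.

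The key step is then the choice of $\e$. We want $\e(\rho+c_1)\le\eta$. The normalization $\|M\|_{\mathrm{Fr}}=1$ forces $\rho$ to be small — indeed $e_0(\F)\le\rho$ and one expects $\rho<1$ in the normalized regime — so I would use the crude bound $\rho+c_1\le 1+c_1=1+\sqrt{l(d-k)}$, and set
\[
\e=\eta\,\bigl(1+\sqrt{l(d-k)}\bigr)^{-1},
\]
which is exactly the value in the statement. With this choice $\e(\rho+c_1)\le\e(1+c_1)=\eta$, hence $e(\F,B_{\S})\le\rho+\eta$. Since $\S$ was an arbitrary optimal partition for $\F'$, the matrix $A_{\w}$ is $\eta$-admissible for $\F$, and the probability bound $1-(2m^2+4m)e^{-rc_0(\e)}$ is inherited verbatim from Theorem \ref{T37}. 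One should also check $0<\e<1$: since $0<\eta<1$ and $1+\sqrt{l(d-k)}\ge 1$, we indeed have $\e\in(0,1)$ (with $\e=\eta$ only in the degenerate case $d=k$), so Theorem \ref{T37} applies.

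The only genuine subtlety — and the step I would be most careful about — is justifying $\rho\le 1$ (or more precisely $\rho+c_1\le 1+c_1$) from the normalization hypothesis, so that replacing $\rho$ by $1$ in the denominator is legitimate; if one does not want to assume $\rho<1$, the cleaner route is to observe that $e_0(\F)\le\|M\|_{\mathrm{Fr}}^2=1$ always (the bundle consisting of the trivial subspace, or any fixed $k$-dimensional subspace, gives error at most $\sum_j\|f_j\|^2=1$), so in the bound above one may freely replace $e_0(\F)$ by $\min\{\rho,1\}\le 1$ before choosing $\e$, which makes the argument unconditional. Everything else is a direct substitution into Theorem \ref{T37}, so there is no real obstacle beyond getting this normalization bookkeeping right.
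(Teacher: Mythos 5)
Your proposal is correct and, in its final (``cleaner route'') form, coincides exactly with the paper's proof: the paper likewise applies Theorem \ref{T37} and uses $e_0(\F)\leq E(\F,\{0\})=\|\F\|^2=1$ to absorb the multiplicative term, giving $e(\F,B_{\w})\leq e_0(\F)+\e(1+c_1)=e_0(\F)+\eta\leq\rho+\eta$ with the stated choice of $\e$ and the probability bound inherited from the theorem. Only your first variant, which bounds $\rho\leq 1$, is not justified by normalization (the sparsity parameter $\rho$ is merely an upper bound for $e_0(\F)$ and need not be at most $1$), so the $e_0(\F)\leq 1$ argument you fall back on is the right one to keep.
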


\begin{proof}
Using the fact that $e_0(\F)\leq E(\F,\{0\})=\|\F\|^2=1$, we conclude from Theorem~\ref{T37} that
\begin{equation}\label{nota-f}
\pr\Big(e(\F,B_{\w})\leq e_0(\F)+ \e (1+c_1) \Big) \geq 1-c_2e^{-rc_0(\e)},
\end{equation}
where $c_1=(l (d-k))^{1/2}$, $d=\rg(\F)$, and $c_2=2 m^2+4m$. That is,
$$ \pr\Big(e(\F,B_{\w})\leq e_0(\F)+\eta \Big)\geq 1-(2m^2+4m)e^{-r c_0(\e)}.$$
\end{proof}
As a consequence of the previous corollary, we have a bound on the dimension of the lower dimensional space to obtain a bundle which produces an error at $\eta$-distance of the minimal error with high probability.

Now, using that $c_0(\e)\geq\frac{\e^2}{12}$ for random matrices with gaussian or Bernoulli entries (see Proposition \ref{prop-gb}), from Theorem~\ref{T37} we obtain the following corollary.

\begin{corollary}
Let $\eta, \dd \in (0,1)$, be given. Assume that $A_{\w}\in\R^{r\times N}$ is a random matrix whose entries are as in Proposition \ref{prop-gb}.

 Then for every $r$ satisfying,
 $$r\geq \frac{12 (1+\sqrt{ l (d-k)})^2}{\eta^2}\ln\Big(\frac{2m^2+4m}{\dd}\Big)$$
 \vspace{.08mm}
with probability at least  $1-\dd$ we have that

$$e(\F,B_{\w})\leq e_0(\F)+\eta.$$
\end{corollary}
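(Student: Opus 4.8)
The plan is to obtain this corollary as a direct specialization of Theorem~\ref{T37}: one chooses the free tolerance $\e$ so that the additive error term collapses exactly to $\eta$, and then converts the exponential tail bound into the stated lower bound on $r$.

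First I would put $\e:=\eta\,(1+\sqrt{l(d-k)})^{-1}$. Since $\eta\in(0,1)$ and $1+\sqrt{l(d-k)}\ge 1$, this $\e$ belongs to $(0,1)$, so the concentration hypothesis (\ref{rm}) and hence Theorem~\ref{T37} are available for this value. Writing $c_1=(l(d-k))^{1/2}$ and using that $\F$ is normalized, so that $e_0(\F)\le E(\F,\{0\})=\|\F\|^2=1$, inequality (\ref{teo-f}) gives, on the event of probability at least $1-(2m^2+4m)e^{-rc_0(\e)}$ furnished by the theorem,
$$e(\F,B_{\w})\le(1+\e)e_0(\F)+\e c_1\le e_0(\F)+\e(1+c_1)=e_0(\F)+\eta,$$
the last equality because $\e(1+c_1)=\eta(1+\sqrt{l(d-k)})^{-1}(1+\sqrt{l(d-k)})=\eta$.

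Next I would estimate the failure probability. For the matrices of Proposition~\ref{prop-gb} one has $c_0(\e)=\tfrac{\e^2}{4}-\tfrac{\e^3}{6}$, and on $(0,1)$ this satisfies $\tfrac{\e^2}{4}-\tfrac{\e^3}{6}\ge\tfrac{\e^2}{12}$ (equivalently $\e\le 1$). Hence the failure probability is at most $(2m^2+4m)e^{-r\e^2/12}$, and this does not exceed $\dd$ as soon as $r\e^2/12\ge\ln\big((2m^2+4m)/\dd\big)$, i.e.
$$r\ge\frac{12}{\e^2}\ln\!\Big(\frac{2m^2+4m}{\dd}\Big)=\frac{12\,(1+\sqrt{l(d-k)})^2}{\eta^2}\ln\!\Big(\frac{2m^2+4m}{\dd}\Big),$$
which is precisely the hypothesis on $r$. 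Combining this with the previous display yields $e(\F,B_{\w})\le e_0(\F)+\eta$ with probability at least $1-\dd$, as claimed.

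There is no genuine difficulty here: the argument is bookkeeping layered on Theorem~\ref{T37}. The only points that need a moment of care are verifying that the chosen $\e$ lies in $(0,1)$ so that the theorem applies, and the elementary inequality $c_0(\e)\ge\e^2/12$ on $(0,1)$; everything else is a direct substitution and simplification.
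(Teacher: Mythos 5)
Your proposal is correct and follows essentially the same route as the paper: the paper obtains this corollary by choosing $\e=\eta(1+\sqrt{l(d-k)})^{-1}$ in Theorem~\ref{T37} (via the preceding $\eta$-admissibility corollary, whose proof is exactly your first display using $e_0(\F)\le\|\F\|^2=1$) and then invoking $c_0(\e)\ge\e^2/12$ to translate the tail bound into the stated condition on $r$. Your verification of $\e\in(0,1)$ and of the inequality $\tfrac{\e^2}{4}-\tfrac{\e^3}{6}\ge\tfrac{\e^2}{12}$ is exactly the bookkeeping the paper leaves implicit.
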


We want to remark here that the results of subsection \ref{NIC} are valid for any
probability distribution that satisfies the concentration inequality (\ref{rm}).
The bound on the error is still valid for ${\rho =0}$.
However in that case we were able to obtain sharp results.

\section {Proofs} \label{proofs}

\subsection{Background and supporting results}

Before proving the results of the previous section we need several known theorems, lemmas, and propositions below.

  Given $M\in\R^{m\times m}$ a Hermitian matrix, let $\lambda_1(M)\geq\lambda_2(M)\geq\dots\geq\lambda_m(M)$ be its eigenvalues
and $s_1(M)\geq s_2(M)\geq\dots\geq s_m(M)\geq0$ be its singular values.

Recall that the Frobenius norm defined in (\ref{fr-norm}) satisfies that
$$\|M\|^2=\sum_{1\leq i,j\leq m}M_{i,j}^2=\sum_{i=1}^m s_i^2(M),$$
where $M_{i,j}$ are the coefficients of $M$.

Given $x\in\R^N$, we write $\|x\|_2$ for the $\ell^2$ norm of $x$ in $\R^N$.

\begin{theorem}\cite[Theorem III.4.1]{Bha97}\label{teo-Bha}

Let $A,B\in \R^{m\times m}$ be Hermitian matrices. Then for any choice of indices $1\leq i_1<i_2<\dots<i_k\leq m$,
$$\sum_{j=1}^k(\lambda_{i_j}(A)-\lambda_{i_j}(B))\leq\sum_{j=1}^k\lambda_j(A-B).$$

\end{theorem}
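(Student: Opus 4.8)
The inequality is the Lidskii--Wielandt inequality, and the plan is to derive it from two variational principles: Wielandt's minimax characterization of a sum of eigenvalues at prescribed positions, together with the Ky Fan maximum principle for the sum of the $k$ largest eigenvalues. Throughout I write $C=A-B$.

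First I would record the two tools. Wielandt's minimax principle, applied to $A$, states that for $1\le i_1<\dots<i_k\le m$,
$$\sum_{j=1}^k \lambda_{i_j}(A)=\max_{V_1\subset\dots\subset V_k}\ \min_{\substack{x_j\in V_j\\ \ip{x_j}{x_l}=\delta_{jl}}}\ \sum_{j=1}^k \ip{A x_j}{x_j},$$
where the maximum is over flags of subspaces with $\dim V_j=i_j$ and the minimum over orthonormal systems $\{x_1,\dots,x_k\}$ subordinate to the flag; this is the natural extension of the Courant--Fischer principle, which I would either cite or prove by the standard dimension-counting argument. Ky Fan's maximum principle states that $\sum_{j=1}^k\lambda_j(C)=\max\bigl\{\sum_{j=1}^k\ip{C y_j}{y_j}:\{y_1,\dots,y_k\}\text{ orthonormal}\bigr\}$.

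The heart of the proof is a short comparison. I would fix a flag $V_1\subset\dots\subset V_k$ attaining the maximum for $A$, and let $\{x_j^A\}$ be the orthonormal system in this flag attaining the inner minimum for $A$, so that $\sum_j\lambda_{i_j}(A)=\sum_j\ip{A x_j^A}{x_j^A}$. Feeding the \emph{same} flag into the minimax formula for $B$ yields the lower bound $\sum_j\lambda_{i_j}(B)\ge\sum_j\ip{B x_j^B}{x_j^B}$, where $\{x_j^B\}$ denotes the $B$-minimizer over this flag. Subtracting the two lines and using that $\{x_j^A\}$ minimizes the $A$-form, so $\sum_j\ip{A x_j^A}{x_j^A}\le\sum_j\ip{A x_j^B}{x_j^B}$, gives
$$\sum_{j=1}^k\lambda_{i_j}(A)-\sum_{j=1}^k\lambda_{i_j}(B)\le\sum_{j=1}^k\ip{(A-B)x_j^B}{x_j^B}=\sum_{j=1}^k\ip{C x_j^B}{x_j^B}.$$
Since $\{x_j^B\}$ is orthonormal, Ky Fan's principle bounds the right-hand side by $\sum_{j=1}^k\lambda_j(C)=\sum_{j=1}^k\lambda_j(A-B)$, which is exactly the asserted inequality.

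The main obstacle is establishing and then correctly applying Wielandt's principle: one must keep straight that the flag is chosen optimally for $A$ but deliberately reused (sub-optimally) for $B$, while the orthonormal tuple that survives into the final Ky Fan step is the $B$-minimizer, not the $A$-minimizer. I would also note why the more obvious route is harder. Passing to the additive compound $D_k$ on $\wedge^k\R^m$, one has $D_k(A)=D_k(B)+D_k(C)$ with $\lambda_1(D_k(C))=\sum_{j=1}^k\lambda_j(C)$, and Weyl's inequality gives $\lambda_p(D_k(A))\le\lambda_p(D_k(B))+\sum_{j=1}^k\lambda_j(C)$ for every $p$; but the eigenvalue $\sum_j\lambda_{i_j}(\cdot)$ occupies different positions $p$ in the spectra of $D_k(A)$ and $D_k(B)$, and these positions need not align, so this argument does not close. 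The Wielandt/Ky Fan comparison avoids the position-matching issue altogether.
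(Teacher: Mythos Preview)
The paper does not supply its own proof of this statement: Theorem~\ref{teo-Bha} is quoted verbatim from Bhatia's \emph{Matrix Analysis} \cite[Theorem~III.4.1]{Bha97} and used as a black box to derive Corollary~\ref{cor-Bha}. There is therefore nothing in the paper to compare your argument against.

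That said, your proof is correct and is in fact essentially the proof Bhatia gives in the cited reference: he too derives Lidskii's inequality by combining Wielandt's minimax principle for $\sum_j\lambda_{i_j}(\cdot)$ with Ky Fan's maximum principle, fixing the flag that is optimal for $A$, reusing it sub-optimally for $B$, and then passing to the $B$-minimizing orthonormal system before invoking Ky Fan. Your bookkeeping of which flag and which orthonormal tuple are carried through each step is exactly the one needed, and your closing remark about why the additive-compound route via $\wedge^k\R^m$ does not close directly is accurate.
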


\begin{corollary}\label{cor-Bha}

Let $A,B\in \R^{m\times m}$ be Hermitian matrices. Assume $k$ and $d$ are two integers which satisfy $0\leq k\leq d\leq m$, then
$$\Big|\sum_{j=k+1}^d(\lambda_{j}(A)-\lambda_{j}(B))\Big|\leq(d-k)^{1/2}\|A-B\|.$$

\end{corollary}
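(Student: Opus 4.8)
The plan is to read the estimate straight off Theorem~\ref{teo-Bha} by choosing the right block of indices, and then to convert the resulting one-sided bound into the two-sided bound by symmetry, finishing with a Cauchy--Schwarz step. First I would apply Theorem~\ref{teo-Bha} with the $d-k$ indices $i_1=k+1,\,i_2=k+2,\dots,\,i_{d-k}=d$, which gives immediately
$$\sum_{j=k+1}^d\bigl(\lambda_j(A)-\lambda_j(B)\bigr)\le\sum_{j=1}^{d-k}\lambda_j(A-B).$$
To control the opposite sign I would run the same inequality with the roles of $A$ and $B$ interchanged, obtaining
$$\sum_{j=k+1}^d\bigl(\lambda_j(B)-\lambda_j(A)\bigr)\le\sum_{j=1}^{d-k}\lambda_j(B-A).$$
Since $\|A-B\|=\|B-A\|$, it then suffices to prove that for an arbitrary Hermitian matrix $C\in\R^{m\times m}$ one has $\sum_{j=1}^{d-k}\lambda_j(C)\le(d-k)^{1/2}\|C\|$, and to apply this with $C=A-B$ and with $C=B-A$.

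For that last estimate I would use the Cauchy--Schwarz inequality applied to the vectors $(\lambda_1(C),\dots,\lambda_{d-k}(C))$ and $(1,\dots,1)\in\R^{d-k}$:
$$\sum_{j=1}^{d-k}\lambda_j(C)\le(d-k)^{1/2}\Bigl(\sum_{j=1}^{d-k}\lambda_j(C)^2\Bigr)^{1/2}\le(d-k)^{1/2}\Bigl(\sum_{j=1}^{m}\lambda_j(C)^2\Bigr)^{1/2}.$$
Because $C$ is Hermitian its singular values are the absolute values of its eigenvalues, so $\sum_{j=1}^m\lambda_j(C)^2=\sum_{j=1}^m s_j(C)^2=\|C\|^2$, the last equality being the Frobenius-norm identity recalled just before Theorem~\ref{teo-Bha}. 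Combining this with the two one-sided inequalities from the first paragraph yields
$$\Bigl|\sum_{j=k+1}^d\bigl(\lambda_j(A)-\lambda_j(B)\bigr)\Bigr|\le(d-k)^{1/2}\|A-B\|,$$
which is the claim. (The degenerate cases $k=d$, where both sides vanish, and $k=0$ need no separate treatment.)

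There is no genuine obstacle here; the corollary is essentially a packaging of Theorem~\ref{teo-Bha}. The only points that require a little care are selecting precisely the index block $\{k+1,\dots,d\}$ when invoking Theorem~\ref{teo-Bha}, remembering to apply it a second time with $A$ and $B$ swapped so as to recover the absolute value, and using the Hilbert--Schmidt identity $\sum_j\lambda_j(C)^2=\|C\|^2$ together with Cauchy--Schwarz rather than a crude operator-norm bound, since the latter would only give the weaker constant $d-k$ in place of $(d-k)^{1/2}$.
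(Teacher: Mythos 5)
Your proof is correct and follows essentially the same route as the paper: invoke Theorem~\ref{teo-Bha} with the index block $\{k+1,\dots,d\}$ and then bound the sum of the top $d-k$ eigenvalues of $A-B$ by $(d-k)^{1/2}\|A-B\|$ via Cauchy--Schwarz and the Frobenius-norm identity (the paper phrases this through singular values $s_j(A-B)$, you through $\sum_j\lambda_j(C)^2=\|C\|^2$, which is the same thing for Hermitian $C$). Your explicit symmetrization in $A$ and $B$ to recover the absolute value is a point the paper leaves implicit, but otherwise the arguments coincide.
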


\begin{proof}

Since $A-B$ is Hermitian, it follows that for each $1\leq j\leq m$ there exists $1\leq i_j\leq m$ such that
\begin{equation*}
|\la_j(A-B)|=s_{i_j}(A-B).
\end{equation*}

From this and Theorem \ref{teo-Bha} we have
\begin{eqnarray*}
\sum_{j=k+1}^d(\lambda_{j}(A)-\lambda_{j}(B))&\leq&\sum_{j=1}^{d-k}\lambda_j(A-B)\leq\sum_{j=1}^{d-k} s_{i_j}(A-B)\\
&\leq&\sum_{j=1}^{d-k} s_j(A-B)\leq(d-k)^{1/2}\Big(\sum_{j=1}^{d-k} s_j^2(A-B)\Big)^{1/2}\\
&\leq&(d-k)^{1/2}\|A-B\|.
\end{eqnarray*}

\end{proof}

\begin{remark}

Note that the bound of the previous corollary is sharp. Indeed, let $A\in\R^{m\times m}$ be the diagonal matrix with coefficients $a_{ii}=2$ for $1\leq i\leq d$, and $a_{ii}=0$ otherwise. Let $B\in\R^{m\times m}$ be the diagonal matrix with coefficients $b_{ii}=2$ for $1\leq i\leq k$, $b_{ii}=1$ for $k+1\leq i\leq d$, and $b_{ii}=0$ otherwise. Thus,
$$\Big|\sum_{j=k+1}^d(\lambda_{j}(A)-\lambda_{j}(B))\Big|=\Big|\sum_{j=k+1}^d(2-1)\Big|=d-k.$$
Further
$\|A-B\|=(d-k)^{1/2},$
and therefore
$$\Big|\sum_{j=k+1}^d(\lambda_{j}(A)-\lambda_{j}(B))\Big|=(d-k)^{1/2}\|A-B\|.$$
\end{remark}

\begin{lemma}\cite{AV06}\label{cotaY}
Suppose that $A_{\w}\in\R^{r\times N}$ is a random matrix which satisfies  (\ref{rm}) and $u,v\in\R^N$, then
$$|\langle u,v\rangle-\langle A_{\w}u,A_{\w}v\rangle|\leq \e\|u\|_2\|v\|_2,$$
with probability at least $1-4e^{-rc_0}.$

\end{lemma}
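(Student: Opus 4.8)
The plan is to reduce the bilinear estimate to the already-assumed concentration inequality (\ref{rm}) by means of the polarization identity, applied to three well-chosen vectors, together with a union bound. First I would dispose of the degenerate cases: if either $u=0$ or $v=0$ the inequality holds trivially (both inner products vanish), so assume $u,v\neq 0$. By homogeneity one may rescale and argue for unit vectors, but it is cleaner to keep $u,v$ general and track the norms at the end.

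The heart of the argument is the identity
\[
\langle u,v\rangle=\tfrac14\bigl(\|u+v\|_2^2-\|u-v\|_2^2\bigr),
\]
and likewise $\langle A_\w u,A_\w v\rangle=\tfrac14\bigl(\|A_\w(u+v)\|_2^2-\|A_\w(u-v)\|_2^2\bigr)$, using linearity of $A_\w$. Apply (\ref{rm}) to the two vectors $x=u+v$ and $x=u-v$; each event fails with probability at most $2e^{-rc_0}$, so by the union bound, with probability at least $1-4e^{-rc_0}$ we have simultaneously
\[
\bigl|\,\|A_\w(u\pm v)\|_2^2-\|u\pm v\|_2^2\,\bigr|\le \e\,\|u\pm v\|_2^2 .
\]
On this event, subtracting the two polarization identities and applying the triangle inequality gives
\[
\bigl|\langle u,v\rangle-\langle A_\w u,A_\w v\rangle\bigr|
\le\tfrac14\e\bigl(\|u+v\|_2^2+\|u-v\|_2^2\bigr)
=\tfrac{\e}{2}\bigl(\|u\|_2^2+\|v\|_2^2\bigr),
\]
using the parallelogram law $\|u+v\|_2^2+\|u-v\|_2^2=2\|u\|_2^2+2\|v\|_2^2$. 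This already yields the stated probability $1-4e^{-rc_0}$, but with the arithmetic-mean bound $\tfrac{\e}{2}(\|u\|_2^2+\|v\|_2^2)$ in place of the geometric-mean bound $\e\|u\|_2\|v\|_2$.

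The main obstacle is therefore sharpening $\tfrac12(\|u\|_2^2+\|v\|_2^2)$ down to $\|u\|_2\|v\|_2$, since the former is strictly larger unless $\|u\|_2=\|v\|_2$. I would handle this by first renormalizing: set $\tilde u=u/\|u\|_2$ and $\tilde v=v/\|v\|_2$, run the polarization argument above on the unit vectors $\tilde u,\tilde v$ to obtain $|\langle\tilde u,\tilde v\rangle-\langle A_\w\tilde u,A_\w\tilde v\rangle|\le\e$ on the same event of probability at least $1-4e^{-rc_0}$, and then multiply through by $\|u\|_2\|v\|_2$, using bilinearity of both $\langle\cdot,\cdot\rangle$ and $(x,y)\mapsto\langle A_\w x,A_\w y\rangle$. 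One subtlety to note is that the four events being union-bounded must be the ones attached to the vectors actually used—namely $\tilde u+\tilde v$ and $\tilde u-\tilde v$—and that these are the same events regardless of the final scaling, so the probability bound $1-4e^{-rc_0}$ is genuinely uniform in $u,v$. This completes the proof.
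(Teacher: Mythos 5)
Your proof is correct, and it is essentially the standard argument for this lemma: the paper itself gives no proof (it cites \cite{AV06}), and the argument there is exactly your polarization-plus-union-bound scheme applied to $\tilde u\pm\tilde v$ for the normalized vectors, then rescaled by $\|u\|_2\|v\|_2$. Nothing is missing.
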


The following proposition was proved in \cite{Sar08}, but we include its proof for the sake of completeness.

\begin{proposition}\label{prop-norm}

Let $A_{\w}\in\R^{r\times N}$be a random matrix which satisfies  (\ref{rm}) and $M\in\R^{N\times m}$ be a matrix. Then, we have
$$\|M^*M-M^*A_{\w}^*A_{\w}M\|\leq \e\|M\|^2,$$
with probability at least $1-2(m^2+m)e^{-rc_0}.$

\end{proposition}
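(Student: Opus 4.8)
The plan is to prove Proposition~\ref{prop-norm} by expressing the Frobenius norm of the Hermitian error matrix $M^*M - M^*A_\w^* A_\w M$ entrywise and controlling each entry with Lemma~\ref{cotaY}. First I would write $M = [f_1 \mid \dots \mid f_m]$ in terms of its columns, so that the $(i,j)$ entry of $M^*M$ is $\ip{f_i}{f_j}$ and the $(i,j)$ entry of $M^*A_\w^*A_\w M$ is $\ip{A_\w f_i}{A_\w f_j}$. Thus the $(i,j)$ entry of the difference $E_\w := M^*M - M^*A_\w^*A_\w M$ is exactly $\ip{f_i}{f_j} - \ip{A_\w f_i}{A_\w f_j}$, which is the quantity bounded in Lemma~\ref{cotaY}.

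Next I would fix a ``good event'' on which all the entrywise bounds hold simultaneously. For each pair $(i,j)$ with $1 \le i \le j \le m$, Lemma~\ref{cotaY} gives $|(E_\w)_{i,j}| \le \e \|f_i\|_2 \|f_j\|_2$ with probability at least $1 - 4e^{-rc_0}$. There are $\binom{m}{2} + m = (m^2+m)/2$ such unordered pairs, so by a union bound the event that $|(E_\w)_{i,j}| \le \e\|f_i\|_2\|f_j\|_2$ holds for \emph{all} $i,j$ (using symmetry of $E_\w$ for the pairs with $i > j$) has probability at least $1 - 4 \cdot \tfrac{m^2+m}{2} e^{-rc_0} = 1 - 2(m^2+m)e^{-rc_0}$, which matches the claimed probability. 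On this event I would then estimate
\begin{equation*}
\|E_\w\|^2 = \sum_{i=1}^m \sum_{j=1}^m (E_\w)_{i,j}^2 \le \e^2 \sum_{i=1}^m \sum_{j=1}^m \|f_i\|_2^2 \|f_j\|_2^2 = \e^2 \Big(\sum_{i=1}^m \|f_i\|_2^2\Big)^2 = \e^2 \|M\|^4,
\end{equation*}
where the last equality uses that $\|M\|^2 = \sum_i \|f_i\|_2^2$ by definition of the Frobenius norm. Taking square roots yields $\|E_\w\| \le \e \|M\|^2$, as desired.

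The only point requiring a little care is the bookkeeping in the union bound: one must count unordered pairs (including the diagonal $i = j$) rather than ordered pairs, so that the factor is $4 \cdot (m^2+m)/2 = 2(m^2+m)$ and not $4m^2$; the symmetry of $E_\w$ lets one deduce the off-diagonal lower-triangular estimates for free from the upper-triangular ones. I do not expect any genuine obstacle here — this is essentially an application of Lemma~\ref{cotaY} to all entries at once — the slightly delicate part is just matching the probability constant $2(m^2+m)$ exactly, which forces the pair-counting to be done as above.
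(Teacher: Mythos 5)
Your proposal is correct and follows essentially the same route as the paper: express the entries of $M^*M-M^*A_{\w}^*A_{\w}M$ as $\ip{f_i}{f_j}-\ip{A_{\w}f_i}{A_{\w}f_j}$, apply Lemma~\ref{cotaY} to each unordered pair $1\leq i\leq j\leq m$, take a union bound over the $(m^2+m)/2$ pairs to get the factor $2(m^2+m)$, and sum the squared entrywise bounds to obtain $\e^2\|M\|^4$ for the squared Frobenius norm. The pair-counting and symmetry bookkeeping you flag is exactly how the paper arrives at the stated probability.
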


\begin{proof}

Set $Y_{i,j}(\w)=(M^*M-M^*A_{\w}^*A_{\w}M)_{i,j}=\langle f_i,f_j\rangle-\langle A_{\w}f_i,A_{\w}f_j\rangle.$ By Lemma \ref{cotaY} with probability at least $1-4e^{-rc_0}$ we have that
\begin{equation}\label{cotaY2}
|Y_{i,j}(\w)|\leq \e\|f_i\|_2\|f_j\|_2
\end{equation}

\smallskip

Note that if (\ref{cotaY2}) holds for all $1\leq i\leq j\leq m$, then

\begin{eqnarray*}
\|M^*M-M^*A_{\w}^*A_{\w}M\|^2&=&\sum_{1\leq i,j\leq m} Y_{i,j}(\w)^2 \\
&\leq& \e^2\sum_{1\leq i,j\leq m} \|f_i\|_2^2\|f_j\|_2^2=\e^2\|M\|^4.
\end{eqnarray*}

Thus, by the union bound, we obtain

\begin{eqnarray*}
\lefteqn{\pr\Big(\|M^*M-M^*A_{\w}^*A_{\w}M\|  \leq \e\|M\|^2\Big)         }\\
&\geq \pr\Big(|Y_{i,j}(\w)|\leq \e  \|f_i\|_2\|f_j\|_2\quad\forall\,1\leq i\leq j\leq m\Big)\\
&\geq 1-\sum_{1\leq i\leq j\leq m}4e^{-rc_0}=1-2(m^2+m)e^{-rc_0}.
\end{eqnarray*}

\end{proof}

\subsection{New results and proof of Theorem \ref{theo-final}}

Given $M\in\R^{N\times m}$ with columns $\{f_1,\dots,f_m\}$ and a subspace $V\subset\R^N$, let $E(M,V)$ be as in (\ref{error-E}), that is
$$E(M,V)=\sum_{i=1}^m d^2(f_i,V).$$
We will denote the $k$-minimal error associated with $M$ by
$$E_k(M):=\min_{V\,:\,\dim(V)\leq k}E(M,V).$$

Let $d:=\rg(M)$. Eckart-Young's Theorem (see \cite{Sch07}) states that
\begin{equation}\label{E-aut}
E_k(M)=\sum_{j=k+1}^d\la_j(M^*M),
\end{equation}
where $\la_1(M^*M)\geq\dots\geq\la_d(M^*M)>0$ are the positive eigenvalues of $M^*M$.

\begin{lemma}\label{lem-Ew}

Assume that $M\in\R^{N\times m}$  and $A \in\R^{r\times N}$ are arbitrary  matrices. Let $S \in\R^{N\times s}$ be a submatrix of $M$.  If $d:=\rg(M)$ is such that $0\leq k\leq d$, then
$$|E_k(S)-E_k(AS)|\leq (d-k)^{1/2}\,\|S^*S-S^*A^*AS\|.$$

\end{lemma}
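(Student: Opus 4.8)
The plan is to express both $E_k(S)$ and $E_k(AS)$ as tails of eigenvalue sequences via the Eckart--Young identity (\ref{E-aut}), and then to estimate the difference of these tails with Corollary~\ref{cor-Bha}. Write $t:=\rg(S)$. Since $S$ is a submatrix of $M$, its column space is contained in that of $M$, so $t\le\rg(M)=d$; moreover the column space of $AS$ is the image under $A$ of the column space of $S$, so $\rg(AS)\le t$. These two rank inequalities are all that the hypothesis ``$S$ is a submatrix of $M$'' will be used for.

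First I would dispose of the trivial case $k\ge t$: then $\dim(\mathrm{col}\,S)=t\le k$ and $\dim(\mathrm{col}\,(AS))=\rg(AS)\le t\le k$, so $E_k(S)=E_k(AS)=0$ and the inequality holds trivially. Assume now $k<t$. Applying (\ref{E-aut}) to $S$ gives $E_k(S)=\sum_{j=k+1}^{t}\lambda_j(S^*S)$, and applying (\ref{E-aut}) to $AS$, together with the fact that the eigenvalues of $S^*A^*AS=(AS)^*(AS)$ indexed beyond $\rg(AS)$ vanish (and $\rg(AS)\le t$), gives $E_k(AS)=\sum_{j=k+1}^{t}\lambda_j(S^*A^*AS)$. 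Hence
$$
E_k(S)-E_k(AS)=\sum_{j=k+1}^{t}\bigl(\lambda_j(S^*S)-\lambda_j(S^*A^*AS)\bigr).
$$
Now $S^*S$ and $S^*A^*AS$ are symmetric matrices in $\R^{s\times s}$ and $0\le k\le t\le s$, so Corollary~\ref{cor-Bha}, applied with its two index parameters taken to be $k$ and $t$, yields
$$
\bigl|E_k(S)-E_k(AS)\bigr|\le (t-k)^{1/2}\,\|S^*S-S^*A^*AS\|\le (d-k)^{1/2}\,\|S^*S-S^*A^*AS\|,
$$
where the last step uses $t\le d$. This is the asserted bound.

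The main obstacle is simply the rank bookkeeping rather than any real analytic content: $S^*S$ lives in $\R^{s\times s}$, not in $\R^{d\times d}$, and $\rg(S)$ may be strictly less than $d$, so the Eckart--Young sums must be truncated at the common level $t=\rg(S)$ before Corollary~\ref{cor-Bha} is applicable, after which one re-enlarges $(t-k)^{1/2}$ to $(d-k)^{1/2}$ using $t\le d$. (Equivalently, one could pad $S$ with $d-s$ zero columns whenever $s<d$; this alters none of $E_k(S)$, $E_k(AS)$, or $\|S^*S-S^*A^*AS\|$, and lets one invoke Corollary~\ref{cor-Bha} directly with index range up to $d$.)
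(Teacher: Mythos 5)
Your proof is correct and follows essentially the same route as the paper: truncate the Eckart--Young sums at $\rg(S)$, apply Corollary~\ref{cor-Bha} to $S^*S$ and $S^*A^*AS$, and enlarge $(\rg(S)-k)^{1/2}$ to $(d-k)^{1/2}$ via $\rg(S)\le\rg(M)$, with the same trivial-case split when $k\ge\rg(S)$. Your extra remark justifying why the $E_k(AS)$ sum may be extended to index $\rg(S)$ (vanishing eigenvalues beyond $\rg(AS)$) is a point the paper leaves implicit, but the argument is the same.
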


\begin{proof}
Let $d_{s}:=\rg(S)$. We have $\rg(AS)\leq d_{s}$. If $d_s \leq k$, the result is trivial. Otherwise by (\ref{E-aut}) and Corollary \ref{cor-Bha}, we obtain
\begin{eqnarray*}
|E_k(S)-E_k(AS)|&=&\Big|\sum_{j=k+1}^{d_{s}}(\la_j(S^*S)-\la_j(S^*A^*AS))\Big|\\
&\leq&(d_{s}-k)^{1/2}\|S^*S-S^*A^*AS\|.
\end{eqnarray*}
As $S$ is a submatrix of $M$, we have that
$
(d_{s}-k)^{1/2}\leq(d-k)^{1/2},
$
which proves the lemma.

\end{proof}

Recall that $e_0(\F)$ is the optimal value for the data $\F$, and  $e_0(A_{\w}\F)$ is the optimal value for the data  $\F'=A_{\w}\F$ (See (\ref{e-0})). A relation between these two values is given by the following lemma.

\begin{lemma}\label{lem-e}

Let $\F=\{f_1,\dots,f_m\}\subset\R^N$ and $0<\e<1$. If $A_{\w}\in\R^{r\times N}$ is a random matrix which satisfies  (\ref{rm}), then with probability exceeding $1-2me^{-rc_0}$, we have
$$e_0(A_{\w}\F)\leq (1+\e)e_0(\F).$$

\end{lemma}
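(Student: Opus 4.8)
The plan is to exhibit, for a given optimal bundle of $\F$, a competing bundle for $\F'=A_\w\F$ whose error is controlled by $(1+\e)e_0(\F)$ with the stated probability.

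First I would fix an optimal bundle $B_0=\{V_1^0,\dots,V_l^0\}$ for $\F$, which exists by Theorem \ref{B0}, so that $e(\F,B_0)=e_0(\F)$. The natural candidate bundle for the transformed data is $W:=\{A_\w V_1^0,\dots,A_\w V_l^0\}$; since $\dim(A_\w V_i^0)\le\dim(V_i^0)\le k$, we have $W\in\B$, and therefore $e_0(A_\w\F)\le e(\F',W)$ by definition of $e_0$.

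Next, for each $i\in\{1,\dots,m\}$ I would choose an index $j(i)$ realizing $\min_{1\le j\le l}d^2(f_i,V_j^0)$, and let $P_i$ denote the orthogonal projection of $\R^N$ onto $V_{j(i)}^0$. Writing $x_i:=f_i-P_if_i$, which is orthogonal to $V_{j(i)}^0$ and satisfies $\|x_i\|_2=d(f_i,V_{j(i)}^0)$, and using that $A_\w P_if_i\in A_\w V_{j(i)}^0$, I get
$$d^2(A_\w f_i,A_\w V_{j(i)}^0)\le\|A_\w f_i-A_\w P_if_i\|_2^2=\|A_\w x_i\|_2^2.$$
Applying the concentration inequality (\ref{rm}) to the single vector $x_i$ gives $\|A_\w x_i\|_2^2\le(1+\e)\|x_i\|_2^2$ with probability at least $1-2e^{-rc_0}$. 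Hence on that event $\min_{j}d^2(A_\w f_i,A_\w V_j^0)\le d^2(A_\w f_i,A_\w V_{j(i)}^0)\le(1+\e)\,\min_{j}d^2(f_i,V_j^0)$.

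Finally I would take a union bound over $i=1,\dots,m$: with probability at least $1-2me^{-rc_0}$ all $m$ of these inequalities hold simultaneously, and summing over $i$ yields
$$e_0(A_\w\F)\le e(\F',W)=\sum_{i=1}^m\min_j d^2(A_\w f_i,A_\w V_j^0)\le(1+\e)\sum_{i=1}^m\min_j d^2(f_i,V_j^0)=(1+\e)e_0(\F),$$
as desired. The only genuinely delicate points are (i) verifying that the image bundle $W$ still lies in $\B$, i.e.\ that dimensions do not increase under $A_\w$, and (ii) reducing each distance-to-subspace comparison to a norm comparison for a single fixed vector $x_i$, so that only $m$ — rather than $ml$ — applications of the concentration inequality are needed; everything else is routine union-bound bookkeeping.
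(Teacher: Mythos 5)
Your proposal is correct and follows essentially the same route as the paper: both compare $e_0(A_\w\F)$ against the image bundle $\{A_\w V_1^0,\dots,A_\w V_l^0\}$, apply the concentration inequality (\ref{rm}) to the $m$ fixed residual vectors $f_i-P f_i$ (with $P$ the projection onto the subspace assigned to $f_i$ by an optimal bundle/partition), and conclude by a union bound giving probability $1-2me^{-rc_0}$. The only difference is organizational — the paper phrases the argument via the optimal partition and the groupwise errors $E(A_\w M_i,A_\w V_i)$ rather than pointwise minimizers $j(i)$ — which does not change the substance.
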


\begin{proof}

Let $V\subset\R^N$ be a subspace and $M\in\R^{N\times m}$ be a matrix. Using (\ref{rm}) and the union bound,
with probability at least $1-2m e^{-rc_0}$ we have that
\begin{eqnarray*}
E(A_{\w}M,A_{\w}V)&=&\sum_{i=1}^m d^2(A_{\w}f_i,A_{\w}V)\leq \sum_{i=1}^m \|A_{\w}f_i-A_{\w}(P_Vf_i)\|_2^2\\
&\leq& (1+\e)\sum_{i=1}^m \|f_i-P_Vf_i\|_2^2=(1+\e)E(M,V),
\end{eqnarray*}
where $P_V$ is the orthogonal projection onto $V$.

Assume that $\S=\{S_1,\dots,S_l\}$ is an optimal partition for $\F$ and $\{V_1,\dots,V_l\}$ is an optimal bundle for $\F$. Suppose that $m_i=\#(S_i)$ and $M_i\in\R^{N\times m_i}$ are the matrices which have $\{f_j\}_{j\in S_i}$ as columns. From what has been proved above and the union bound, with probability exceeding $1-\sum_{i=1}^l 2 m_ie^{-rc_0}=1-2m e^{-rc_0}$, it holds
$$
e_0(A_{\w}\F)\leq\sum_{i=1}^l E(A_{\w}M_i,A_{\w}V_i)\leq (1+\e)\sum_{i=1}^lE(M_i,V_i)=(1+\e)e_0(\F).
$$
\end{proof}
\begin {proof}[Proof of Proposition \ref {P36}] This is a direct consequence of Lemma \ref {lem-e}.
\end{proof}

\begin{proof}[Proof of Theorem \ref {T37}]
If $\S_{\w}=\{S_{\w}^1,\dots,S_{\w}^l\},$ and $m_{\w}^i=\#(S_{\w}^i)$, let $M_{\w}^i\in\R^{N\times m_{\w}^i}$ be the matrices which have $\{f_j\}_{j\in S_{\w}^i}$ as columns.
Since $B_{\w}=\{V_{\w}^1,\dots,V_{\w}^l\}$ is generated by $\S_{\w}$ and $\F$, it follows that $E(M_{\w}^i,V_{\w}^i)=E_k(M_{\w}^i)$.
And as $\S_{\w}$ is an optimal partition for $A_{\w}\F$ in $\R^r$, we have that $\sum_{i=1}^l E_k(A_{\w}M_{\w}^i)=e_0(A_{\w}\F)$.

Hence, using Lemma \ref{lem-Ew}, Lemma \ref{lem-e}, and Proposition \ref{prop-norm}, with high probability it holds that
\begin{eqnarray*}
e(\F,B_{\w})&\leq&\sum_{i=1}^lE(M_{\w}^i,V_{\w}^i)=\sum_{i=1}^l E_k(M_{\w}^i)\\
&\leq& \sum_{i=1}^l E_k(A_{\w}M_{\w}^i)+ (d-k)^{1/2}\sum_{i=1}^l \|M_{\w}^{i*}M_{\w}^i-M_{\w}^{i*}A_{\w}^*A_{\w}M_{\w}^i\|\\
&\leq&  e_0(A_{\w}\F)+ (l(d-k))^{1/2}\Big(\sum_{i=1}^l \|M_{\w}^{i*}M_{\w}^i-M_{\w}^{i*}A_{\w}^*A_{\w}M_{\w}^i\|^2\Big)^{1/2}\\
&\leq& (1+\e)e_0(\F)+ (l(d-k))^{1/2}\|M^{*}M-M^{*}A_{\w}^*A_{\w}M\|\\
&\leq&(1+\e)e_0(\F)+ \e (l (d-k))^{1/2},
\end{eqnarray*}
where $M\in\R^{N\times m}$ is the unitary Frobenius norm matrix which has the vectors $\{f_1,\dots,f_m\}$ as columns.

The right side of (\ref{teo-f}) follows from  Proposition \ref{prop-norm}, Lemma \ref{lem-e}, and the fact that
\begin{eqnarray*}
\lefteqn{\pr\Big(e(\F,B_{\w})\leq (1+\e)e_0(\F)+\e (l (d-k))^{1/2}\Big)      }    \\
&\geq& \pr\Big(\|M^{*}M-M^{*}A_{\w}^*A_{\w}M\|\leq \e \text{ and } e_0(A_{\w}\F)\leq(1+\e)e_0(\F)\Big)\\
&\geq& 1-(2 (m^2+m)e^{-rc_0}+2me^{-rc_0})= 1-(2 m^2+4m)e^{-rc_0}.
\end{eqnarray*}

\end{proof}

\section{Conclusions and related work}

The existence  of optimal union of subspaces models and an algorithm for finding them was obtained  in \cite{ACM08}.
In the present paper we have focused on the computational complexity of finding these models. More precisely,  we studied techniques of dimension reduction  for the algorithm proposed in \cite{ACM08}. These techniques can also be used in a wide variety of situations and are not limited to this particular application.

We  used random linear transformations to map the data to a lower dimensional space. The ``projected'' signals were then processed
in that space, (i.e. finding the optimal union of subspaces) in order to   produce an optimal partition.
Then we applied this partition to the original data to obtain the associated model for that partition and obtained a bound for the error.

We have analyzed two situations. First we studied the case
when  the data belongs to a union of subspaces (ideal case with no noise). In that case  we obtained the optimal model  using almost any transformation (see Proposition~\ref{prop-adm-0}).

In the presence of noise, the data usually doesn't belong to a union of low dimensional subspaces. Thus, the distances from the  data to an optimal model add up to a positive error.
In this case, we needed to restrict the admissible transformations. We applied recent results on distributions of matrices  satisfying concentration inequalities, which also proved to be very useful in the theory of compressed sensing.

We were able to prove that the model obtained by our approach is quasi optimal
with a high probability. That is, if we map the data using a random matrix from one of the distributions satisfying the concentration law, then with high probability, the distance of the data to the model is bounded by the optimal
distance plus a constant. This constant depends on the parameter of the concentration law, and the parameters of the model
(number and dimension of the  subspaces allowed in the model).

 Let us remark here that the problem of finding the optimal union of subspaces that fit a given data set
 is also known as ``Projective clustering". Several algorithms have been proposed in the literature to solve this problem.
Particularly relevant is \cite{DRVW06} (see also references therein) where the authors used results from volume and adaptive sampling to obtain a polynomial-time approximation scheme. See \cite{AM04} for a related algorithm.

\end{document}